\definecolor{darkblue}{RGB}{0, 0, 100}
\definecolor{darkgreen}{RGB}{0, 100, 0}
    \theoremstyle{nonumberplain}
    \newtheorem{proof}{Proof}
\newtheorem{theorem}{Theorem}[section]
\newtheorem{proposition}{Proposition}[section]
\newtheorem{lemma}{Lemma}[section]
\newtheorem{definition}{Definition}[section]
\newtheorem{corollary}{Corollary}[section]
\newtheorem{claim}{Claim}[section]
\newtheorem{conjecture}{Conjecture}[section]
{
    \theoremheaderfont{\bfseries}
    \theorembodyfont{\normalfont}
}
\newcommand{\RNum}[1]{\uppercase\expandafter{\romannumeral #1\relax}}
\begin{document}
\title{\bf On the $k$-linkage  problem for generalizations of semicomplete digraphs \footnote{The author's work is supported by National Natural Science Foundation of China (No.12071260).}}
\date{}
\author{Jia Zhou\textsuperscript{1}, J{\o}rgen Bang-Jensen\textsuperscript{2}, Jin Yan\textsuperscript{1} \footnote{Corresponding author. E-mail address: yanj@sdu.edu.cn.} \unskip\\[2mm]
\textsuperscript{1} School of Mathematics, Shandong University, Jinan 250100, China
\\
\textsuperscript{2} Department of Mathematics and Computer Science, University of\\ Southern Denmark, Odense DK-5230, Denmark}

\maketitle

\begin{abstract}
A directed graph (digraph) $ D $ is \textbf{$ k $-linked} if $ |D| \geq 2k $, and for any $ 2k $ distinct vertices $ x_1, \ldots, x_k, y_1, \ldots, y_k $ of $ D $, there exist vertex-disjoint paths $ P_1, \ldots, P_k $ such that $ P_i $ is a path from $ x_i $ to $ y_i $ for each $ i \in [k] $. In 1980, Thomassen conjectured that there exists a function $ f(k) $ such that every $ f(k) $-strong digraph is $ k $-linked. He later disproved this conjecture by showing that $ f(2) $ does not exist for general digraphs and proved that the function $f(k)$ exists for the class of tournaments. In this paper we consider  a large class $\mathcal{ D} $ of digraphs which  includes all \textbf{semicomplete digraphs} (digraphs with no pair of non-adjacent vertices) and all \textbf{quasi-transitive digraphs} (a digraph $D$ is quasi-transitive if for any three vertices $x, y, z$ of $D$, whenever $xy$ and $yz$ are arcs, then $x$ and $z$ are adjacent). We prove that every $ 3k $-strong digraph $D\in \mathcal{D}$ with minimum out-degree at least $ 23k $ is $ k $-linked. A digraph $D$ is \textbf{$l$-quasi-transitive} if whenever there is a path of length $l$ between vertices $u$ and $v$ in  $D$ the vertices  $u$ and $v$ are adjacent. Hence 2-quasi-transitive digraphs are exactly the quasi-transitive digraphs. We prove that there is a function $f(k,l)$ so that every $f(k,l)$-strong $l$-quasi-transitive digraph is $k$-linked. The main new tool in our proofs significantly strengthens an important property of vertices with maximum in-degree in a tournament. While Landau in 1953 already proved that such a vertex $v$ is reachable by all other vertices by paths of length at most 2, we show that, in fact, the structure of these paths is much richer. In general there are many such paths for almost all out-neighbours of $v$ and this property is crucial in our proofs.
\end{abstract}

\vspace{1ex}
{\noindent\small{\bf Keywords: } Connectivity; Linkage; Semicomplete composition }
\vspace{1ex}

{\noindent\small{\bf AMS subject classifications.} 05C20, 05C38, 05C40}

\section{Introduction}

Connectivity is among the most fundamental concepts in graph theory. As such, this research area is of great significance. Throughout this paper, we let $k \geq 2$ be an integer, and abbreviate "vertex-disjoint" to "disjoint" for brevity. A (di)graph is (\textbf{strongly}) \textbf{$k$-connected} (briefly "$k$-strong" for digraphs) if it has at least $k+1$ vertices and, after deleting any set $S$ of at most $k-1$ vertices, there is a path from $x$ to $y$ for any pair of distinct vertices $x$ and $y$. A (di)graph $D$ is \textbf{$k$-linked} if for any $2k$ distinct vertices $x_1, \ldots, x_k, y_1, \ldots, y_k$ in $D$, there exist $k$ disjoint paths $P_1, \ldots, P_k$ such that $P_i$ starts at $x_i$ and terminates at $y_i$ for each $i \in [k]$. It is easy to verify that every $k$-linked graph is $k$-connected, but the converse does not hold  not even for planar graphs as there exist 5-connected planar graphs which are not 2-linked (see e.g. the graph $G4$ in \cite[Figure 10.10]{bang2009}).
Thomas and Wollan \cite{Thomas(2005)} have shown that a connectivity of $10k$ is sufficient to ensure that a graph is $k$-linked. In 1980, two different authors \cite{seymourDM29,thomassenEJC1} proved simultaneously that every 6-connected graph is 2-linked. (This inspired Thomassen to conjecture that a similar result should hold for digraphs).

\begin{conjecture}\label{conj1}
  \cite{thomassenEJC1} There exists a function $f(k)$ such that every $f(k)$-strong digraph is $k$-linked.
\end{conjecture}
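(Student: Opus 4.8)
The plan is to adapt to digraphs the density-based strategy that Thomas and Wollan used in the undirected setting, replacing edge density by a routing argument tailored to strong connectivity. Fix the $2k$ terminals $x_1,\dots,x_k,y_1,\dots,y_k$ in an $f(k)$-strong digraph $D$, where $f(k)$ will be taken of order $ck$ for a suitable absolute constant $c$. The first step is to isolate a bounded-size ``core'' $H\subseteq V(D)$ that is itself $k$-linked in a robust sense and to which every terminal can be attached by disjoint paths of the correct orientation. Concretely, I would use Menger's theorem and the fan lemma to produce, for each source $x_i$, a family of internally disjoint out-paths reaching $H$, and for each sink $y_i$ a family of internally disjoint in-paths leaving $H$; connectivity of order $ck$ guarantees enough such paths that, after a disjointness clean-up across all $2k$ terminals, one retains a single out-path $Q_i$ from $x_i$ to an entry vertex $a_i\in H$ and a single in-path $R_i$ from an exit vertex $b_i\in H$ to $y_i$, with all $Q_i,R_i$ pairwise disjoint and internally disjoint from $H$.

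The second step is to realise the $k$ pairs $(a_i,b_i)$ inside the core by disjoint directed paths. The cleanest situation is when $H$ can be chosen to be semicomplete, for then Thomassen's tournament linkage result, together with the rich reachability structure highlighted in the abstract, supplies the required internal $a_i$-$b_i$ paths; in general one asks only that $H$ be ``internally $k$-linked.'' Concatenating $Q_i$, the internal path, and $R_i$ yields the linkage path $P_i$ from $x_i$ to $y_i$, and a union bound over the lengths of the $2k$ attaching paths and $|H|$ then fixes the threshold $f(k)$.

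The main obstacle, and the step on which the whole argument turns, is guaranteeing that strong connectivity alone forces a core $H$ with the two-sided, orientation-aware linkage property demanded in the second step. In an undirected graph, high connectivity forces, via an edge-density bound, a large $k$-linked subset, and the two endpoints of each required path play symmetric roles. In a digraph the in-fans feeding $H$ and the out-fans leaving $H$ must be simultaneously disjoint and correctly oriented, and a strongly connected digraph can channel flow in one direction only through narrow bottlenecks even when its connectivity is large. The hard part will therefore be to show that $f(k)$-strong connectivity rules out such one-directional bottlenecks and produces a genuinely bidirectional, densely linkable core; quantifying this, that is, controlling how the orientation of the attaching paths interacts with the internal routing, is where I expect the real difficulty to lie, and it is precisely this directional obstruction that any candidate function $f(k)$ must overcome.
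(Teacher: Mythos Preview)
The statement you are trying to prove is not a theorem but a conjecture, and in fact it is \emph{false}. As the paper explicitly states immediately after Conjecture~\ref{conj1}, Thomassen himself disproved it in 1991 by constructing, for every $l\in\mathbb{N}$, an $l$-strong digraph that is not $2$-linked (see Proposition~\ref{prop1} in Section~6). So no function $f(k)$ with the claimed property can exist for general digraphs, and any purported proof must contain a fatal error.

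In your outline the error is exactly where you flag the ``main obstacle'': you need high strong connectivity alone to force the existence of a bounded-size core $H$ that is internally $k$-linked and simultaneously admits correctly oriented, pairwise disjoint in-fans and out-fans from all $2k$ terminals. This step cannot be carried out, because Thomassen's family of counterexamples shows that arbitrarily high strong connectivity does \emph{not} rule out the one-directional bottlenecks you worry about. Concretely, in those digraphs every attempt to route two prescribed $(x_1,y_1)$- and $(x_2,y_2)$-paths is forced through a common structure regardless of how large the connectivity is, so no choice of $H$ can be $2$-linked in the robust sense you require. Your plan is therefore not merely incomplete at the hard step; that step is provably impossible. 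The positive results in the paper (Theorems~\ref{the0}, \ref{the1}, \ref{fina}) all restrict to special classes of digraphs precisely because the general conjecture fails.
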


In 1991 Thomassen \cite{Thomassen(1991)} disproved Conjecture \ref{conj1} by constructing, for each  $l \in \mathbb{N}$, an  $l$-strong digraph that is not 2-linked. However, the situation is entirely different for some special classes of digraphs. In 1984, Thomassen \cite{Thomassen(1984)} verified that there exists some constant $C$ such that $f(k) \leq Ck!$ for tournaments. A digraph $D$ is a \textbf{tournament} if it is obtained by orienting each edge of a complete graph in exactly one direction. K\"{u}hn, Lapinskas, Osthus, and Patel \cite{Khn(2014)} improved this bound to $f(k) \leq 10^4 k \log k$ for tournaments, and they conjectured that $f(k)$ is linear in $k$. Subsequently, in 2015, this conjecture was confirmed by Pokrovskiy \cite{Pokrovskiy(2015)} who proved that $f(k) \leq 452k$ for tournaments. In 2021, Meng, Rolek, Wang, and Yu \cite{Meng(2021)} improved this connectivity bound and presented that $f(k)\leq 40k-31$ for tournaments. Further, Bang-Jensen and Johansen \cite{Bang(2021)} verified that every $(13k- 6)$-strong tournament with minimum out-degree at least $28k- 13$ is $k$-linked. There have been further improvements to this result, as detailed in \cite{ Girao(2021),Snyder(2019), Zhou, Zhou(2024)}. Additionally, Pokrovskiy \cite{Pokrovskiy(2015)} proposed the following conjecture. The \textbf{minimum semi-degree} of a digraph $D$ is the minimum over all out-degrees and in-degrees of vertices of $D$.

\begin{conjecture}\label{1.2}
  \cite{Pokrovskiy(2015)}  There exists an integer $g(k)$ such that every $2k$-strong tournament with minimum semi-degree at least $g(k)$ is $k$-linked.
\end{conjecture}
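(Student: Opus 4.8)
The plan is to split the argument into a structural part and a routing part, invoking the known degree--connectivity criteria for tournaments to cash in the structural part. Fix a $2k$-strong tournament $T$ with minimum semi-degree at least $g(k)$ and terminals $x_1,\dots,x_k,y_1,\dots,y_k$. Part~(A): produce a subtournament $W$ of $T$, disjoint from all terminals and with $|W|$ large, that is $(13k-6)$-strong and has minimum out-degree at least $28k-13$; by the theorem of Bang-Jensen and Johansen \cite{Bang(2021)} (any of its later refinements would also do) such a $W$ is itself $k$-linked. Part~(B): route the terminals to $W$, i.e., find $2k$ pairwise disjoint paths $Q_1,\dots,Q_k,R_1,\dots,R_k$ where $Q_i$ runs from $x_i$ to some $a_i\in W$, $R_i$ runs from some $b_i\in W$ to $y_i$, and each path meets $W$ only in its $W$-endpoint. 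Given (A) and (B), the $2k$ vertices $a_1,\dots,a_k,b_1,\dots,b_k$ are distinct; feeding the pairs $(a_i,b_i)$ to the $k$-linkedness of $W$ produces disjoint $a_i$--$b_i$ paths $P'_i$ inside $W$, whence $Q_iP'_iR_i$ is an $x_i$--$y_i$ path and the $k$ of them are pairwise disjoint. So the whole task reduces to establishing (A) and (B).

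Part~(B) is the routine direction. Build an auxiliary digraph: take $T$, delete every arc with both ends in $W$, add a source $s$ with arcs to each $x_i$, a sink $t$ with arcs from each $y_j$, and $k$ ``transfer'' vertices $m_1,\dots,m_k$, each joined both ways to every vertex of $W$. This digraph has $k$ internally disjoint $s$--$t$ paths: after deleting fewer than $k$ vertices there remain a surviving $x_i$, a surviving $y_j$ and a surviving $m_\ell$, and --- since $T$ minus fewer than $k$ vertices is still strongly connected and $W$ is huge --- an $s$--$t$ walk from $s$ through $x_i$ into $W$, through $m_\ell$, back into $W$ and out through $y_j$ to $t$; hence the minimum $s$--$t$ separator has size at least $k$ and Menger's theorem applies. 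Each of the $k$ resulting paths splits at its transfer vertex into a first half (from $x_i$ into $W$) and a second half (from $W$ to $y_j$); relabelling so that $Q_i$ is the first half leaving $x_i$ and $R_i$ the second half entering $y_i$ yields exactly the configuration that (A) feeds on. Only a $k$-strongness-type hypothesis is spent in (B); the assumption of $2k$-strongness in the conjecture is essentially best possible anyway, since a $k$-linked tournament is automatically $(2k-1)$-strong, and it is in (A) that connectivity has to do real work.

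Part~(A) is the crux, and I expect it to be the main obstacle. The obvious approach is recursive: if $T$ is not $(13k-6)$-strong, pick a separator $S$ with $|S|\le 13k-7$; the strong components of $T-S$ order linearly as $C_1\Rightarrow\cdots\Rightarrow C_p$, and the minimum semi-degree hypothesis forces the two ends to be large, since every out-neighbour of a vertex of $C_p$ lies in $C_p\cup S$ (so $T[C_p]$ has minimum out-degree at least $g(k)-|S|$) and symmetrically $T[C_1]$ has minimum in-degree at least $g(k)-|S|$; one then recurses into $C_p$, which retains a large minimum out-degree. The trouble --- and this is precisely the gap between the conjectured bound $2k$ and the constants current methods give (or the bound $3k$ obtained here for the wider class $\mathcal{D}$) --- is that this recursion can descend through many separators before it reaches a $(13k-6)$-strong subtournament, each level costing up to $13k-7$ of the out-degree budget, and a crude count does not bound the number of levels in terms of $k$. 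Pushing (A) through therefore seems to demand a genuinely finer argument: one wants to exploit that the first components $C_1$ discarded at the successive levels are also large and, coming from successively nested subtournaments, pairwise disjoint --- so they cannot all be accommodated unless the recursion halts early or a large highly connected subtournament is already exposed --- and to combine this with a sharper structural tool, for instance a strengthening of Landau's classical fact about vertices of extreme in- or out-degree which, using the minimum out-degree hypothesis, supplies near such a vertex many short flexible paths and hence high internal connectivity of a candidate core. As the conjecture only asks that some $g(k)$ exist, whatever explicit bound (polynomial in $k$, or larger) such an analysis yields is acceptable; all the difficulty is in bringing the structural step down to connectivity as low as $2k$.
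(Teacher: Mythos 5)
This statement is a \emph{conjecture} (attributed to Pokrovskiy), not a theorem of the paper, and the paper offers no proof of it. More importantly, the paper explicitly records that the conjecture is \emph{false}: the first and third authors disproved it in \cite{Zhou(2024)} by constructing, for each $n\geq 14k^2$, $2k$-strong tournaments of order $n$ with minimum semi-degree at least $(n-2k-2)/(2k+2)$ that are not $k$-linked. Since that semi-degree bound grows with $n$, for any candidate $g(k)$ one can take $n$ large enough to defeat it. So no proof strategy can succeed, and your proposal necessarily breaks somewhere.

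The break is exactly where you suspected, in Part~(A). You cannot, in general, extract from a $2k$-strong tournament with large minimum semi-degree a large $(13k-6)$-strong subtournament disjoint from the terminals (or any highly connected ``core'' $W$ suitable for Part~(B)): if you could, the reduction you describe would prove the conjecture, contradicting the counterexamples. Your own diagnosis of the recursion --- that peeling off separators of size up to $13k-7$ costs out-degree at each level and nothing bounds the number of levels --- is not merely a technical obstacle to be finessed by a ``genuinely finer argument''; the counterexamples show the structural claim itself fails at connectivity $2k$. (Incidentally, Part~(B) also spends more than you admit: to route $x_1,\dots,x_k$ into $W$ and $y_1,\dots,y_k$ out of $W$ by $2k$ pairwise \emph{vertex-disjoint} paths you need a $2k$-type Menger bound, not a $k$-type one; your single-source/single-sink auxiliary digraph only certifies $k$ internally disjoint $s$--$t$ paths, and each such path contributes one $x_i$ and one $y_j$, so the count works out, but the separator bound you need is $k$ only because each path carries both an entry and an exit --- this is fine but should be said carefully.) The paper's actual positive results in this direction (Theorems \ref{the0} and \ref{the1}) assume $3k$-strongness together with a minimum \emph{out-degree} condition, and the correct open problem is whether $3k$ can be lowered to $2k+1$ (the conjecture of Gir\~ao, Popielarz and Snyder quoted in Section~6), not to $2k$.
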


 Gir\~{a}o, Popielarz, and Snyder \cite{Girao(2021)} constructed an infinite family of $2.5k$-strong tournaments with small semi-degree that are not $k$-linked. This shows that large minimum out-degree is necessary when the connectivity is at most $2.5k$. Furthermore, they showed in \cite{Girao(2021)} that every $(2k + 1)$-strong tournament with minimum out-degree at least $Ck^{31}$ is $k$-linked for some constant $C$. In \cite{Zhou(2024)} the first and third author of this paper disproved Conjecture \ref{1.2} by constructing, for each $n \geq 14k^2$, a family of $2k$-strong tournaments of order $n$ with minimum semi-degree at least $(n - 2k - 2)/(2k + 2)$ that are not $k$-linked. This shows that $f(k)\geq 2k+1$ for the class of tournaments. It was also shown in \cite{Zhou(2024)} that there exists a constant $C$ such that every $(2k + 1)$-strong semicomplete digraph with minimum out-degree at least $Ck^4$ is $k$-linked. A digraph $D$ is \textbf{semicomplete} if there is at least one arc between any two vertices $x$ and $y$. Hence this class of digraphs contains all tournaments. 

 In an effort to identify large classes of digraphs for which Conjecture \ref{conj1} holds, we study so-called compositions of (semicomplete) digraphs.  Let $H$ be a digraph on $h \geq 2$ vertices $v_1, \ldots, v_h$, and let $S_1, \ldots, S_h$ be a collection of {disjoint} (but possibly isomorphic) digraphs. The \textbf{composition} $D = H[S_1, \ldots, S_h]$ is a digraph with vertex set $V(D) = V(S_1) \cup \cdots \cup V(S_h)$ and arc set $A(D) = \left(\bigcup_{i=1}^h A(S_i)\right) \cup \left\{s_i s_j : s_i \in V(S_i), s_j \in V(S_j), v_i v_j \in A(H)\right\}$. We say that $ S_i $ is a \textbf{part} of $ D $. Specially, if $H$ is semicomplete, then $D = H[S_1, \ldots, S_h]$ is called a \textbf{semicomplete composition}. For more results on semicomplete compositions, see \cite{bang2018,bangJGT95,bangEJC120}. Let $\delta^+(D)$ denote the \textbf{minimum out-degree} of $D$. We present the main results in this paper below.

\begin{theorem}\label{the0}
Every $3k$-strong semicomplete digraph $D$ with $\delta^+(D)\geq 22 k$ is $k$-linked.
\end{theorem}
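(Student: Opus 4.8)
The plan is to use the standard three-stage template for linkage in dense digraphs --- route the tails $x_1,\dots,x_k$ into a highly connected ``hub'' $W$, route $W$ back out to the heads $y_1,\dots,y_k$, and then complete the $k$ paths \emph{inside} $W$ --- but to carry out the third and hardest stage using the strengthened Landau property announced in the abstract. Write $T=\{x_1,\dots,x_k,y_1,\dots,y_k\}$, so $|T|=2k$. Note first that $3k$-strongness gives $\delta^-(D)\ge 3k$ in addition to $\delta^+(D)\ge 22k$, that $D-S$ is $(3k-|S|)$-strong and still semicomplete for every vertex set $S$, and that if $|D|$ is not much larger than $Ck$ for a suitable constant $C$ the theorem follows from a direct Menger/greedy argument; so we may assume $|D|$ is comfortably large. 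The first step is to fix the hub: choose a large subset $W\subseteq V(D)\setminus T$ such that $D[W]$ is semicomplete and, say, $2k$-strong, and inside it single out a vertex $v$ of maximum in-degree. Such a $W$ exists because deleting the $2k$ terminals costs only $O(k)$ in connectivity and in minimum out-degree.

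Stage A attaches the terminals to $W$: I would produce $2k$ pairwise disjoint paths $Q_1,\dots,Q_k,R_1,\dots,R_k$, where $Q_i$ goes from $x_i$ to a vertex $a_i\in W$, $R_i$ goes from a vertex $b_i\in W$ to $y_i$, the $2k$ vertices $a_i,b_i$ are distinct, and no $Q_i$ or $R_i$ has an interior vertex in $W$. Since $D$ is $3k$-strong this is essentially a Menger ``fan'' computation (into $W$ for the $x_i$, out of $W$ for the $y_i$), but some care is needed to keep the $Q_i,R_i$ short --- for which the linear lower bound on $\delta^+(D)$ and the semicompleteness are used --- so that the residual digraph in which the hub lives is barely disturbed. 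After Stage A it remains to find $k$ pairwise disjoint paths $L_1,\dots,L_k$ inside $D[W]$ with $L_i$ from $a_i$ to $b_i$; concatenating gives the desired $x_i$--$y_i$ paths, and global disjointness is automatic from the disjointness of the $W$-interiors and of the outside parts.

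Stage B is the crux. Let $v$ be the chosen vertex of maximum in-degree in $D[W]$. By Landau every vertex of $W$ reaches $v$ by a path of length at most $2$, and by the new strengthening all but a bounded number of out-neighbours $u$ of $v$ satisfy that $N^+(u)\cap N^-(v)\cap W$ is \emph{large}, i.e.\ there are many internally different length-$2$ paths from $u$ to $v$. The idea is to route each $L_i$ as a short detour ``towards $v$ and back'': $N^-(v)\cap W$ has size at least $(|W|-1)/2$, so every $b_i$ is reached directly from a large portion of it, while every $a_i$ reaches a large portion of it through $v$'s out-neighbourhood by the richness property; a system-of-distinct-representatives / Hall-type selection then picks the at most $2k$ interior vertices of the $L_i$ pairwise disjoint, after first disposing by hand of the bounded list of exceptional out-neighbours of $v$ and of the few $a_i,b_i$ that already lie in or next to $N^-(v)\cup\{v\}$. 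The hypothesis $\delta^+(D)\ge 22k$ is precisely the slack that keeps these greedy/Hall choices feasible even after the $O(k)$ vertices used by the other $L_j$'s and by $Q_j,R_j$ have been removed, and tracing through the worst case is where the constant $22$ (rather than something smaller) comes from.

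The main obstacle I expect is exactly Stage B's disjointness. Ordinary connectivity is not enough because a target $b_i$ may have very few in-neighbours inside the region we route through, and the plain Landau fact ``everybody reaches $v$ in two steps'' is useless on its own since all $k$ routes would funnel through $v$ or a small bottleneck; the strengthened Landau lemma is designed to defeat precisely this, upgrading ``there is a short path to $v$'' to ``there are robustly many, for almost every relevant vertex'', which supplies the expansion needed to re-route around the $O(k)$ conflicts. The remaining difficulties are bookkeeping --- monitoring how strong connectivity and minimum out-degree of the residual digraph degrade as paths are deleted, which forces the $3k$ and $22k$ thresholds --- together with the degenerate cases ($|D|$ small, or some terminal already in or adjacent to the hub), which should be dispatched directly.
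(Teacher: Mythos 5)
Your overall architecture (attach the terminals to a hub, finish inside the hub using the strengthened Landau property) is in the right spirit, but Stage B as described has a genuine gap that the actual proof is specifically engineered to avoid. You route each internal path $L_i$ ``towards $v$ and back'': from $a_i$ into $N^-(v)\cap W$ via the richness of $v$'s out-neighbourhood, and then claim that ``every $b_i$ is reached directly from a large portion of $N^-(v)\cap W$''. That claim is false in general: $b_i$ is an arbitrary vertex of $W$ produced by a Menger fan out of the hub, its in-degree may be as small as $3k$, and none of its in-neighbours need lie in $N^-(v)\cap W$. The strengthened Landau property gives robust short access \emph{to $v$ itself} (equivalently, expansion into $N^-(v)$), but it says nothing about getting back out of $N^-(v)$ to a prescribed target, and you cannot let all $k$ routes actually pass through the single vertex $v$. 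So the expansion you need at the \emph{target} end of each $L_i$ is simply not available for arbitrary $b_i$, and no Hall-type selection can manufacture it.

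The paper resolves exactly this by inverting the roles. Instead of one max-in-degree vertex $v$, it iteratively extracts a set $U$ of $3k$ nearly in-dominating vertices (Lemma \ref{lemma8} applied $3k$ times to shrinking subdigraphs), and then applies Menger to find the $k$ disjoint paths \emph{from $U$ to $Y$}, chosen with $|V(\mathcal{Q})|$ minimum. The internal linkage targets are then the initial vertices $q_i\in U$ of these paths, and each $q_i$, being nearly in-dominating, is reachable by many independent paths of length at most $2$ from almost every vertex --- precisely the in-expansion at the target that your $b_i$'s lack. The remaining $2k$ vertices of $U$ serve as relays and force every vertex outside $U$ to be either a $2k$-out-dominator or a $(k+1)$-in-dominator of $U$, which drives the split of $X$ into $X_1\cup X_2$; and it is the \emph{minimality} of $|V(\mathcal{Q})|$ (not shortness, which Menger cannot guarantee) that keeps the length-at-most-$3$ linking paths off the interiors of the $\mathcal{Q}$-paths, via the exchange argument in Lemma \ref{lem2.2}. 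If you want to salvage your outline, the essential repair is to choose the hub targets to be nearly in-dominating vertices \emph{before} running Menger, rather than accepting whatever endpoints a fan out of a large $W$ happens to give you.
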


If we let each $S_i$ be a vertex, then the semicomplete composition $D = H[S_1, \ldots, S_h]$ becomes a semicomplete digraph. Using this fact and Theorem \ref{the0} we can prove the following result.

 \begin{theorem}\label{the1}
 Suppose $D=H[S_1,\ldots,S_h] $ is a semicompete composition with the property that  $|V(D)\setminus S_i|\geq 2k-3$ for each $i\in [h]$. If $D$ is $3k$-strong with $\delta^+(D)\geq 23 k$, then $D$ is $k$-linked.
\end{theorem}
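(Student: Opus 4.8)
The plan is to deduce Theorem~\ref{the1} from Theorem~\ref{the0}, of which it is the extension obtained by allowing each ``vertex'' of a semicomplete digraph to be blown up into a part. First note that the hypothesis $|V(D)\setminus S_i|\ge 2k-3$ together with $k\ge 2$ forces $h\ge 2$; since $D$ is strong, projecting a path of $D$ to a walk of $H$ shows $H$ is strong, and a strong semicomplete digraph on at least two vertices has every vertex on a cycle of length $2$ or $3$. Fix terminals $x_1,\dots,x_k,y_1,\dots,y_k$, let $T$ be their set, and put $J=\{\,j\in[k] : x_jy_j\notin A(D)\text{ and }x_j,y_j\text{ lie in a common part}\,\}$; for $j\in J$ write $S_{i(j)}$ for that common part. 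Let $D'$ be the \emph{semicomplete} digraph obtained from $D$ by adding, inside every part $S_i$, all missing arcs, so that each $D'[S_i]$ is a complete digraph; arcs of $D'$ between distinct parts are exactly the arcs of $D$. Since $A(D)\subseteq A(D')$, the digraph $D'$ is $3k$-strong, and $\delta^+(D')\ge\delta^+(D)\ge 23k>22k$, so Theorem~\ref{the0} applies and $D'$ is $k$-linked.

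The structural heart of the reduction is the following observation about \emph{minimum-length} $k$-linkages in $D'$: an arc of $A(D')\setminus A(D)$ (call it a ``new'' arc) can occur only inside a length-one path $x_j\to y_j$ with $x_j,y_j$ in a common part. Indeed, all vertices of a part $S_i$ have the same out-neighbours and the same in-neighbours outside $S_i$ and $D'[S_i]$ is complete; hence if a path $P_j$ of length at least $2$ uses a new arc $a\to b$ with $a,b\in S_i$, then one of the local shortcuts ``replace $a\to b\to c$ by $a\to c$'' (when $b$ is internal), ``replace $a'\to a\to y_j$ by $a'\to y_j$'' (when $b=y_j$), or ``replace $x_j\to a_1\to a_2$ by $x_j\to a_2$'' (when $a\to b$ is the first arc of $P_j$) produces a strictly shorter $k$-linkage, contradicting minimality. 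Consequently, if $\mathcal L=\{P_1,\dots,P_k\}$ is a shortest $k$-linkage of the given terminals in $D'$, then $P_l\subseteq D$ for every $l\notin J$, while for $j\in J$ the path $P_j$ is just the (possibly new) arc $x_j\to y_j$.

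It remains to replace, for each $j\in J$, the arc $x_j\to y_j$ by a genuine $x_j$--$y_j$ path $Q_j$ of $D$, with the $Q_j$ pairwise internally disjoint and internally disjoint from $\bigcup_{l\notin J}P_l$. Because $v_{i(j)}$ lies on a cycle of $H$ of length $2$ or $3$, such a $Q_j$ can be taken of length at most $3$: it suffices to pick an out-neighbour $a_j$ of $x_j$ and an in-neighbour $b_j$ of $y_j$ in $D$, both outside $S_{i(j)}$, lying in the next and previous parts of a shortest cycle of $H$ through $v_{i(j)}$ (for a $2$-cycle $a_j$ already reaches $y_j$, so only one vertex is needed), all of these vertices being distinct, off $T$, and off $\bigcup_{l\notin J}P_l$ and the previously built $Q_{j'}$. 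Processing the at most $k$ pairs of $J$ greedily and splicing the resulting detours into $\mathcal L$ then yields a $k$-linkage of $D$ for the original terminals.

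The main obstacle is exactly this last step: guaranteeing that all the detour vertices $a_j,b_j$ can be chosen simultaneously disjoint and off the linkage produced by Theorem~\ref{the0}. This is where the three numerical hypotheses are spent. When $S_{i(j)}$ is not too large, the bound $\delta^+(D)\ge 23k$ — one $k$ more than what Theorem~\ref{the0} needs — supplies at each $x_j$ (resp.\ $y_j$) a reservoir of out- (resp.\ in-) neighbours outside $S_{i(j)}$ large enough to absorb the terminals together with the $O(k)$ previously used detour vertices; in the complementary regime, where $S_{i(j)}$ is large (so the linkage paths passing through it have spare vertices of $S_{i(j)}$ to be diverted through), the condition $|V(D)\setminus S_i|\ge 2k-3$, combined with $|V(D)|\ge 3k+1$, still keeps the outside of every part big enough to route all at most $k$ detours; and $3k$-strong connectivity is what allows a linkage path that happens to pass through some $x_j$ or $y_j$ to be nudged off it. A handful of rigid configurations — notably $h=2$ (so $D$ is a digon composition of two parts, each of size $\ge 3k$) and the case where $v_{i(j)}$ has a single neighbour in $H$, joined to it by a digon — fall outside this scheme and are disposed of directly, the composition structure being explicit enough there to write down the linkage by hand.
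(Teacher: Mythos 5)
Your reduction to Theorem \ref{the0} by completing each part, and your observation that in a minimum-length linkage of $D'$ a new arc can only occur as an entire one-arc path $x_j\to y_j$ with $x_j,y_j$ in a common part, are both correct. But the proof then stands or falls on the repair step --- replacing each such arc by a genuine $(x_j,y_j)$-path of $D$ disjoint from the rest of the linkage --- and that step has a genuine gap. Theorem \ref{the0} gives you no control over where the other paths $P_l$ go: they may cover \emph{all} of $V(D)\setminus S_{i(j)}$ (which the hypotheses only bound below, not above, and which can be as small as $O(k)$), while $S_{i(j)}$ itself may be arcless, so that every $(x_j,y_j)$-path of $D$ must leave $S_{i(j)}$ and there is simply no unused vertex for it to visit. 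The hypothesis $\delta^+(D)\ge 23k$ does not rescue this: all $23k$ out-neighbours of $x_j$ may lie inside the arcless part $S_{i(j)}$ together with a handful of already-occupied vertices outside it. Your sketch of a case split (``$S_{i(j)}$ small'' versus ``$S_{i(j)}$ large, divert the linkage through spare vertices of $S_{i(j)}$'') is exactly where the difficulty lives, and rerouting paths produced by a black-box application of Theorem \ref{the0} is not something you can do ``greedily''; as written this is an assertion, not an argument. The same goes for the ``handful of rigid configurations disposed of directly.''

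The paper closes this hole by never opening it: instead of completing each part symmetrically, it first deletes all intra-part arcs (Lemma \ref{lem2.3}, which is why it needs a separate, inductive treatment of $h=2$), and then adds arcs inside $S_i$ so that $Y_i'=Y\cap S_i$ and $S_i\setminus Y_i'$ each become complete and $Y_i'$ dominates $S_i\setminus Y_i'$ \emph{with no arcs back}. Then $y_j$ has no in-neighbour inside its part other than forbidden vertices of $Y$, so no $(x_j,y_j)$-path can live inside a part at all; combined with a preliminary induction on $k$ that removes every pair with $x_iy_i\in A(D)$, this forces every minimal path of the Theorem~\ref{the0} linkage to use only inter-part arcs, i.e.\ arcs of $D$, and no repair is needed. (This is also why the paper needs $\delta^+(D)\ge 23k$ rather than $22k$: a vertex may lose up to $k$ out-neighbours to $Y_i'$.) If you want to salvage your route, you would need either to prove a version of Theorem \ref{the0} in which the linkage avoids a prescribed small vertex set, or to build the one-way orientation into your auxiliary digraph as the paper does.
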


 The restriction on the size of each $S_i$ in Theorem \ref{the1} is necessary (see Proposition \ref{prop2} in Section 6).

A digraph $D$ is called an \textbf{extended semicomplete digraph} if $D = H[S_1, \ldots, S_h]$ for some semicomplete digraph $H$ and arcless digraphs $S_1, \ldots, S_h$. Clearly, extended semicomplete digraphs are a special class of semicomplete compositions. Moreover, if the extended semicomplete digraph $D = H[S_1, \ldots, S_h]$ is $3k$-strong, then $|D \setminus S_i| \geq 3k$ for each $i \in [h]$. Hence, we get Corollary \ref{cor1}.

\begin{corollary}\label{cor1}
Every $3k$-strong extended semicomplete digraph $D$ with $\delta^+(D)\geq 23 k$ is $k$-linked.
\end{corollary}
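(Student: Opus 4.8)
The plan is to deduce Corollary \ref{cor1} directly from Theorem \ref{the1} by verifying its hypotheses for the special case of extended semicomplete digraphs. Recall that an extended semicomplete digraph is a semicomplete composition $D=H[S_1,\ldots,S_h]$ in which every part $S_i$ is arcless. So the only thing to check is the size condition $|V(D)\setminus S_i|\geq 2k-3$ for each $i\in[h]$, together with the connectivity and minimum out-degree hypotheses, which are assumed verbatim in the corollary.

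First I would observe that since $D$ is $3k$-strong, it is in particular $3k$-connected, so $|V(D)|\geq 3k+1$. The key step is to bound $|S_i|$ from above: if some part $S_i$ contained at least $|V(D)|-3k+1$ vertices, then $V(D)\setminus S_i$ would be a vertex set of size at most $3k-1$ whose removal destroys all arcs leaving $S_i$ other than those internal to $S_i$; but $S_i$ is arcless, so removing $V(D)\setminus S_i$ leaves $S_i$ with no arcs at all, and since $|S_i|\geq 2$ (as $|V(D)|\geq 3k+1>2$ forces at least one part of size $\geq 2$ only if... ) — more carefully, I would simply note that $V(D)\setminus S_i$ is a separator (or the whole complement) and use $3k$-strong connectivity: for any $i$, the set $V(D)\setminus S_i$ must have size at least $3k$, since otherwise deleting $V(D)\setminus S_i$ leaves $S_i$, which is arcless and hence disconnected when $|S_i|\geq 2$, contradicting that $D$ minus fewer than $3k$ vertices is still strongly connected. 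When $|S_i|=1$ the bound $|V(D)\setminus S_i|=|V(D)|-1\geq 3k\geq 2k-3$ is immediate. Either way we get $|V(D)\setminus S_i|\geq 3k$.

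Finally, since $3k\geq 2k-3$ (indeed $3k - (2k-3) = k+3 > 0$), the size hypothesis $|V(D)\setminus S_i|\geq 2k-3$ of Theorem \ref{the1} is satisfied for every $i\in[h]$. The connectivity hypothesis ($D$ is $3k$-strong) and the degree hypothesis ($\delta^+(D)\geq 23k$) are identical in the corollary and the theorem. Hence Theorem \ref{the1} applies and yields that $D$ is $k$-linked, completing the proof.

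There is essentially no obstacle here; the only point requiring a line of justification is the lower bound $|V(D)\setminus S_i|\geq 3k$, and the only subtlety is the degenerate case $|S_i|=1$, which is handled separately as above. Everything else is a direct invocation of Theorem \ref{the1}.
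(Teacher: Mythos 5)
Your proposal is correct and matches the paper's own argument: the paper likewise deduces Corollary \ref{cor1} from Theorem \ref{the1} by observing that $3k$-strong connectivity forces $|V(D)\setminus S_i|\geq 3k\geq 2k-3$ for every (arcless) part $S_i$. Your extra care with the case $|S_i|=1$ and the explicit separator argument for $|S_i|\geq 2$ just spells out what the paper states in one line.
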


A digraph $D$ is \textbf{quasi-transitive} if for any three vertices $x, y, z$ of $D$, such that $xy$ and $yz$ are arcs of $D$, there is at least one arc between $x$ and $z$. Theorem \ref{the2} below provides a characterization of strong quasi-transitive digraphs. This implies, in particular, that strong quasi-transitive digraphs are semicomplete compositions.
\begin{theorem}\label{the2}
\cite{Bang(1995)} Let $D$ be a strong quasi-transitive digraph. Then there exists a strong semicomplete digraph $H$ and quasi-transitive digraphs $S_1, \ldots, S_h$ such that $D = H[S_1, \ldots, S_h]$, where $h = |V(H)|$ and each $S_i$ is either a vertex or non-strong.
\end{theorem}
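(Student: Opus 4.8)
The plan is to reduce to the case where $D$ is not semicomplete---otherwise we may take $H=D$ with each $S_i$ a single vertex---and then build the decomposition out of the non-adjacency structure of $D$. I would first record the basic fact about quasi-transitive digraphs that if $X$ and $Y$ are distinct strong components of such a digraph with at least one arc from $X$ to $Y$, then every vertex of $X$ dominates every vertex of $Y$: fixing such an arc $xy$ and arbitrary $x'\in X$, $y'\in Y$, one walks a path from $x'$ to $x$ inside $X$ and a path from $y$ to $y'$ inside $Y$, and at each step quasi-transitivity forces an adjacency while strong-connectivity of the component forbids that arc from pointing the wrong way. (This also yields the analogous decomposition of a non-strong quasi-transitive digraph over its acyclic condensation, and I expect to reuse it.)

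Now assume $D$ is strong and not semicomplete. Let $V_1,\dots,V_h$ be the connected components of the graph $U$ on $V(D)$ whose edges are the non-adjacent pairs of $D$ (equivalently, the classes of the equivalence relation generated by non-adjacency). The routine part is to prove that between two of these classes the adjacency is uniform. If $x,y$ lie in the same class and $z$ lies in another, then $z$ is adjacent to both $x$ and $y$, and a short case analysis shows $z$ cannot be ``oriented differently'' towards $x$ and towards $y$: in each such configuration one of $x,y$ sends an arc to $z$ while $z$ sends an arc to the other, producing a path of length two and hence, by quasi-transitivity, the very adjacency between $x$ and $y$ we excluded. Chaining this along the non-adjacency chains that define the classes, and running it in both coordinates, yields the trichotomy: for $i\ne j$, either every vertex of $V_i$ dominates every vertex of $V_j$, or the reverse holds, or all arcs between $V_i$ and $V_j$ occur in both directions. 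Hence $D=H[D[V_1],\dots,D[V_h]]$ for the quotient digraph $H$ on $\{v_1,\dots,v_h\}$, and this $H$ is semicomplete by the trichotomy and strong because every walk of $D$ projects to a walk of $H$.

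The one genuinely hard step is to show $h\ge 2$, equivalently that a strong quasi-transitive digraph which is not semicomplete cannot have $U$ connected and spanning $V(D)$. Here strong connectivity must really be used: for each vertex $v$, quasi-transitivity gives $N^{++}(v)\cap\overline{N}(v)=\emptyset$ and $N^{--}(v)\cap\overline{N}(v)=\emptyset$, so no arc runs from $N^{+}(v)$ into $\overline{N}(v)$ and none runs from $\overline{N}(v)$ into $N^{-}(v)$; I would play these forbidden configurations against the existence of cycles through prescribed vertices---exploiting, for instance, that a closest non-adjacent pair lies at distance exactly three, so that a shortest path between them is rigidly constrained (it carries a directed triangle and two further arcs)---and argue by induction on $|V(D)|$ to contradict connectedness of $U$. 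Once $h\ge 2$ is known, the rest is immediate: if some part $D[V_i]$ with $|V_i|\ge 2$ were strong, it would be a strong quasi-transitive digraph that is not semicomplete ($V_i$, a component of $U$ with at least two vertices, contains a non-adjacent pair) whose own non-adjacency graph $U[V_i]$ is connected---precisely what the previous step forbids, applied to $D[V_i]$. Thus every $D[V_i]$ is a single vertex or is not strong, and $D=H[D[V_1],\dots,D[V_h]]$ with $h=|V(H)|\ge 2$ is the required decomposition. I expect essentially all the work to sit in the ``$h\ge 2$'' step; everything else is bookkeeping with quasi-transitivity.
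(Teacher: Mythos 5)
This statement is quoted in the paper from \cite{Bang(1995)}; the paper contains no proof of it, so your proposal can only be measured against the known argument of Bang-Jensen and Huang. Your overall architecture is the standard one and most of it is sound: taking $V_1,\ldots,V_h$ to be the components of the non-adjacency graph $U$, your case analysis showing that a vertex $z$ adjacent to both members of a non-adjacent pair $\{x,y\}$ must either dominate both or be dominated by both (with no reverse arcs) is correct, the chaining along $U$-paths in both coordinates does give uniform adjacency between distinct components, $H$ is semicomplete and strong by projection, and the final reduction (a strong part $D\langle V_i\rangle$ with $|V_i|\ge 2$ would be a strong, non-semicomplete quasi-transitive digraph whose non-adjacency graph is connected and spanning) is valid \emph{provided} the $h\ge 2$ step holds as a statement about all strong quasi-transitive digraphs.

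The genuine gap is exactly where you place it: the claim that the non-adjacency graph of a strong quasi-transitive digraph on at least two vertices is disconnected. Given your reductions this claim is essentially equivalent to the whole theorem, and for it you offer only a collection of true local facts --- no arc from $N^+(v)$ into the set of non-neighbours of $v$, the distance between a non-adjacent pair being exactly $3$ with the shortest path carrying a rigid ``square'' $x\to u\to v\to y$, $v\to x$, $y\to u$ --- together with a stated intention to ``play these against cycles'' and ``argue by induction on $|V(D)|$''. That is a plan, not an argument: nothing in the proposal actually derives a contradiction from the assumption that $U$ is connected and spanning, and the interaction between the sets of vertices that dominate, respectively are dominated by, a non-adjacent pair and the rest of that pair's $U$-component is precisely the delicate part of the 1995 proof (which proceeds via the minimal-path lemma: a minimal path between non-adjacent vertices has exactly four vertices and induces a semicomplete digraph except possibly at its endpoints). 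Until that step is supplied, the proposal does not prove the theorem; everything else in it is, as you say yourself, bookkeeping around the missing core.
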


It follows from Theorem \ref{the2} that a $3k$-strong quasi-transitive digraph $D = H[S_1, \ldots, S_h]$ is also a semicomplete composition with $|D \setminus S_i| \geq 3k$ for each $i \in [h]$. Consequently, we obtain Corollary \ref{cor2}, which significantly improves the result of Bang-Jensen \cite{Bang(1999)}, who demonstrated that $f(k) \leq 2^k \cdot k (k-1)!$ for quasi-transitive digraphs.

\begin{corollary}\label{cor2}
Every $3k$-strong quasi-transitive digraph $D$ with $\delta^+(D)\geq 23 k$ is $k$-linked.
\end{corollary}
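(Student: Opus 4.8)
The plan is to deduce Corollary \ref{cor2} directly from Corollary \ref{cor1}'s underlying mechanism, namely Theorem \ref{the1}, by using the structural characterization in Theorem \ref{the2}. So first I would take a $3k$-strong quasi-transitive digraph $D$ with $\delta^+(D) \geq 23k$. Since $D$ is strong, Theorem \ref{the2} applies and yields a strong semicomplete digraph $H$ on $h = |V(H)|$ vertices together with quasi-transitive digraphs $S_1, \ldots, S_h$ such that $D = H[S_1, \ldots, S_h]$, where each $S_i$ is either a single vertex or a non-strong digraph. In particular $D$ is a semicomplete composition, so the only thing left to check before invoking Theorem \ref{the1} is the size condition $|V(D) \setminus S_i| \geq 2k-3$ for every $i \in [h]$.

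The key step is therefore to bound $|S_i|$ away from $|V(D)|$, equivalently to show $|V(D) \setminus S_i|$ is large. I would argue as follows. If $h = 1$ then $D = S_1$ is quasi-transitive and, being $3k$-strong, it is in particular strong; but Theorem \ref{the2} forces $S_1$ to be either a vertex or non-strong, and a $3k$-strong digraph with $k \geq 2$ is strong on at least $3k+1 \geq 7$ vertices, a contradiction unless we interpret the decomposition with $h \geq 2$. (More carefully: when $D$ itself is strong and has the decomposition, $h \geq 2$ whenever $D$ is not itself a single vertex or non-strong; since our $D$ is strong with many vertices, $h \geq 2$.) Now suppose for contradiction that $|V(D) \setminus S_i| \leq 2k - 4$ for some $i$. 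Every vertex $u \in V(D) \setminus S_i$ that is an out-neighbour of some vertex of $S_i$ in $H$ — together with all vertices in parts $S_j$ with $v_j v_i \notin A(H)$ — must account for separating structure; more directly, since $H$ is strong on $h \geq 2$ vertices, the vertex $v_i$ has an out-neighbour $v_j$ in $H$, and then $V(D) \setminus S_i \supseteq S_j$ contributes, but the clean bound comes from connectivity: deleting $V(D) \setminus S_i$ from $D$ either disconnects $D$ or leaves $D[S_i]$, and since $D$ is $3k$-strong we need $|V(D) \setminus S_i| \geq 3k$ unless $V(D) \setminus S_i$ is all of $V(D)$ minus nothing — wait, the correct statement is that $V(D)\setminus S_i$ is a separator (or $S_i = V(D)$), because every path leaving $S_i$ must pass through $V(D)\setminus S_i$; hence if $S_i \neq V(D)$ and $S_i$ contains at least two vertices that need to be separated from the rest, $3k$-strongness gives $|V(D)\setminus S_i| \geq 3k \geq 2k - 3$. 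When $S_i$ is a single vertex the bound $|V(D)\setminus S_i| = |V(D)| - 1 \geq 3k \geq 2k-3$ is immediate since $|V(D)| \geq 3k+1$.

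The only remaining subtlety is the case where $S_i = V(D)$, i.e.\ $h = 1$, which as noted cannot occur for our strong $D$ with $|V(D)| \geq 3k+1 \geq 7$, because Theorem \ref{the2} would then require $D = S_1$ to be a single vertex or non-strong, contradicting that $D$ is $3k$-strong. So in all cases $|V(D) \setminus S_i| \geq 2k - 3$ for each $i \in [h]$. Having verified all the hypotheses of Theorem \ref{the1} — $D$ is a semicomplete composition, $|V(D) \setminus S_i| \geq 2k - 3$ for each $i$, $D$ is $3k$-strong, and $\delta^+(D) \geq 23k$ — we conclude that $D$ is $k$-linked, which proves Corollary \ref{cor2}.

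I expect the main (though still minor) obstacle to be the careful handling of the degenerate cases in Theorem \ref{the2}'s decomposition, namely ruling out $h = 1$ and confirming that the separator argument for $|V(D)\setminus S_i|$ is valid even when $S_i$ is non-strong or has few vertices; once the size bound $|V(D)\setminus S_i| \geq 2k-3$ is in hand, the corollary is an immediate application of Theorem \ref{the1}, so no new linkage arguments are needed here. The substantive content all lives in Theorems \ref{the0} and \ref{the1}.
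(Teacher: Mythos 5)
Your proposal is correct and follows essentially the same route as the paper: apply Theorem \ref{the2} to decompose $D$ as a semicomplete composition, observe that each part $S_i$ is a single vertex or non-strong so that $V(D)\setminus S_i$ is either all but one vertex or a separating set, whence $3k$-strongness gives $|V(D)\setminus S_i|\geq 3k\geq 2k-3$, and then invoke Theorem \ref{the1}. The paper states this bound without elaboration, so your extra care with the $h=1$ and single-vertex cases is fine but not a departure.
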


A digraph $D$ is \textbf{$l$-quasi-transitive} if for every pair of vertices $u, v$ of $D$, the existence of a $(u, v)$-path of length $l$ in $D$ implies that there is at least one arc between $u$ and $v$. Hence, the 1-quasi-transitive digraphs are the  semicomplete digraphs and the 2-quasi-transitive digraphs are exactly the  quasi-transitive digraphs.
 For a number of results on quasi-transitive and general $l$-quasi-transitive digraphs, see Chapter 8 in
\cite{bang2018}. As $ l $ increases, the condition forcing adjacencies becomes (much) more relaxed, making the structure of $ l $-quasi-transitive digraphs more difficult to work with than that of quasi-transitive digraphs. Still  we are able to prove that Conjecture \ref{conj1} holds for  $l$-quasi-transitive digraphs.

\begin{theorem}\label{fina}
Let $l$ be an integer with $l\geq 2$. Every $ 81k^2(l+2)^2$-strong $l$-quasi-transitive digraph is $k$-linked.
\end{theorem}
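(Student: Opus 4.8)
The plan is to deduce the statement from the semicomplete composition result Theorem~\ref{the1} (and, in the boundary case where every part is a single vertex, from Theorem~\ref{the0}) by proving a structural reduction: a sufficiently highly connected $l$-quasi-transitive digraph is, after deleting a bounded set of vertices and closing up short reachability, a semicomplete composition of the shape required by Theorem~\ref{the1}. Throughout put $C:=81k^2(l+2)^2$ and let $D$ be $C$-strong and $l$-quasi-transitive; then $|V(D)|>C\ge 2k$ and $\delta^+(D),\delta^-(D)\ge C$, so $D$ is in particular strong.

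The structural core rests on an elementary consequence of $l$-quasi-transitivity: if $u_0u_1\cdots u_m$ is a shortest path in $D$, then for every $j$ with $l\le j\le m$ the subpath $u_{j-l}u_{j-l+1}\cdots u_j$ has length exactly $l$, so $u_{j-l}$ and $u_j$ are adjacent; and since $d(u_{j-l},u_j)=l\ge 2$ along a shortest path we cannot have $u_{j-l}u_j\in A(D)$, so the only possibility is the ``back arc'' $u_ju_{j-l}\in A(D)$. Iterating this produces, along any shortest path, arcs $u_ju_{j-l}$ for all $j\ge l$; in a $C$-strong $D$ this ``back-arc'' phenomenon forces the large-distance structure of $D$ to degenerate. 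Combining it with Menger's theorem in the $C$-strong digraph $D$, I would extract a set $X$ with $|X|=O(k(l+2))$ such that the $(\le l)$-reachability closure $\widehat D$ of $D-X$ — the digraph on $V(D)\setminus X$ with an arc $uw$ whenever $D-X$ has a $(u,w)$-path of length at most $l$ — is a semicomplete composition $H[S_1,\dots,S_h]$ with $|V(\widehat D)\setminus S_i|\ge 2k-3$ for every $i$, is $3k$-strong, and has $\delta^+(\widehat D)\ge 23k$.

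Now, given $2k$ distinct terminals $x_1,\dots,x_k,y_1,\dots,y_k$, I would first add them to $X$ (at most $2k$ extra vertices) and rerun the reduction so that the terminals lie in $\widehat D$, then apply Theorem~\ref{the1} to $\widehat D$ to obtain disjoint paths $Q_1,\dots,Q_k$ in $\widehat D$ with $Q_i$ from $x_i$ to $y_i$. Since each arc of $\widehat D$ is witnessed by a path of length $\le l$ in $D-X$, the last step is to realise the $Q_i$ inside $D$. One cannot naively expand every arc of every $Q_i$, because the $Q_i$ may be long and there is not enough room; instead I would either first replace $\{Q_i\}$ by a linkage of $\widehat D$ using altogether only $O(k)$ arcs that are not already arcs of $D$, or rerun the routing of Theorem~\ref{the1} inside $D$ guided by $\widehat D$, in either case expanding each such non-arc into an internally disjoint $(u,w)$-path of length $\le l$ using the large semi-degree and the $C$-strong connectivity of $D$. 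This re-routing costs $O(kl)$ further vertices, and together with the two rounds of deletion of $X$ it is exactly this bookkeeping that fixes the precise bound $81k^2(l+2)^2$.

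The step I expect to be the main obstacle is the structural reduction itself. As stressed in the introduction, for $l\ge 3$ there is no decomposition theorem for $l$-quasi-transitive digraphs comparable with Theorem~\ref{the2}, so isolating the bounded set $X$ and proving that $\widehat D$ is genuinely a semicomplete composition with small parts — and not merely a digraph of bounded diameter — requires a delicate analysis of how the back-arc phenomenon interacts with high connectivity; controlling the connectivity, out-degree and part sizes of $\widehat D$ so that Theorem~\ref{the1} applies is then a careful but routine computation.
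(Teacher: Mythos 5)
Your overall strategy -- pass to an auxiliary semicomplete digraph whose extra arcs are witnessed by short paths (via Lemma~\ref{key}), find a linkage there, and pull it back to $D$ -- is the same as the paper's, but the proposal has a genuine gap precisely at the step you dismiss as bookkeeping: realising the linkage in $D$. Your closure $\widehat D$ adds an arc $uw$ as soon as \emph{one} $(u,w)$-path of length at most $l$ exists (it should in any case be $l+1$, since Lemma~\ref{key} only gives $d(v,u)\le l+1$). With that definition there is no way to expand the new arcs of a linkage into internally disjoint paths of $D$: a new arc may have a unique witness path, and that path may run through other linkage paths or through the witnesses of other new arcs. The paper's construction of $D'$ is designed exactly to avoid this: an arc $uv$ is added only when there are at least $f(k,l)=\binom{9k-6}{2}(l+2)+(2l+5)k+9k$ \emph{independent} $(u,v)$-paths of length at most $l+1$, a supply guaranteed by repeatedly applying Lemma~\ref{key} to the digraph left after deleting previously found witnesses (this is where most of the connectivity $81k^2(l+2)^2$ is spent) together with a pigeonhole step to fix the direction. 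Only with this redundancy can one greedily choose pairwise disjoint replacements avoiding everything already built.

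The second missing mechanism is control over \emph{how many} new arcs the linkage uses. Applying Theorem~\ref{the1} (or \ref{the0}) to $\widehat D$ as a black box gives paths that may traverse arbitrarily many new arcs, and your alternative -- ``replace $\{Q_i\}$ by a linkage using altogether only $O(k)$ non-arcs'' -- is asserted without any argument. The paper instead routes every path through a hub $U$ of $9k-6$ nearly in-dominating vertices (Lemma~\ref{lemma8}) containing a short-anchoring pair $U_1,U_2$ (Lemma~\ref{anchor}), so that each connector has length at most $3$ and the total number of new arcs ever expanded is $O(k^2)$; this is also where the factor $k^2$ in the bound comes from, which your accounting of $O(k(l+2))$ deleted vertices plus $O(kl)$ rerouting cost cannot produce. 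Two smaller points: there is no need for a nontrivial composition structure on $\widehat D$ (a semicomplete digraph is already a composition with singleton parts, so your reduction target is really Theorem~\ref{the0}), and your treatment of the terminals is contradictory -- you add them to the deleted set $X$ and then require them to lie in $\widehat D$; the paper instead keeps them and adds dummy arcs into each $x_i$ and out of each $y_i$, which can never appear on an $X$-to-$Y$ linkage.
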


The rest of the paper is structured as follows. Section 2 introduces key concepts and lemmas that will be utilized throughout the article. In Section 3, we will first provide the proof of Theorem \ref{the0}. In the proof of Theorem \ref{the0}, we provide a new linking strategy. This involves constructing a so-called nearly in-dominating  set (see Definition \ref{def1}), applying Menger's Theorem, and skillfully using Lemma \ref{lem2.2} to progressively construct the  desired set of disjoint paths. Section 4 proves Theorem \ref{the1} using a subtle construction. Section 5 presents the proof of Theorem \ref{fina}. Here, we solve the linking problem for an $l$-quasi-transitive digraph $D$ by constructing an associated semicomplete digraph $D'$, finding appropriate disjoint paths in $D'$ and then transforming these back to the desired linking in $D$. Finally, Section 6 gives some remarks and open problems.

\section{Terminology and Preliminaries}

\subsection{Notation}
Notation not introduced here is consistent with \cite{bang2009}. Let $\mathbb{N}$ be the set of natural numbers. For an integer $i$, we use the notation  $[i] = \{1, \ldots, i\}$, and $[i, i+j] = \{i, i+1, \ldots, i+j\}$. The digraphs considered in this paper are always simple, i.e., without loops and multiple arcs. Let $D$ be a digraph with vertex set $V(D)$ and arc set $A(D)$. We use $|D|$ to represent the number of vertices in $D$. For two vertices $x, y \in V(D)$, we denote the arc from $x$ to $y$ as $xy$. For a vertex $x$ of a digraph $D$, we define $N^+_D(x) = \{y \mid xy \in A(D)\}$ (resp. $N^-_D(x) = \{y \mid yx \in A(D)\}$) as the \textbf{out-neighborhood} (resp. \textbf{in-neighborhood}) of $x$, and $d^+_D(x) = |N^+_D(x)|$ (resp. $d^-_D(x) = |N^-_D(x)|$) as the \textbf{out-degree} (resp. \textbf{in-degree}) of $x$. Furthermore, let $\delta^+(D)$ (resp. $\delta^-(D)$) denote the minimum out-degree (resp. minimum in-degree) of $D$. 

For a set $X \subseteq V(D)$, the subgraph of $D$ induced by $X$ is denoted by $D\langle X \rangle$, and the digraph obtained from $D$ by deleting $X$ and all arcs incident with $X$ is denoted by $D \setminus X$. For a digraph $D$ and sets $A, B \subseteq V(D)$, we use the notation $A \rightarrow B$ to indicate that every arc of $D$ between $A$ and $B$ is directed from $A$ to $B$. In particular, we say that $x$ \textbf{dominates} $y$ if $x \rightarrow y$. For a digraph $ D $, we denote by $\kappa(D) $ the degree of strong connectivity of $ D $. So $\kappa(D)=k$ means that $D$ is $k$-strong but not $(k+1)$-strong.

A \textbf{matching} in a digraph $D=(V,A)$ is a set of arcs $A'=\{u_1v_1,u_2v_2,\ldots{},u_rv_r\}\subset A$  such that no two arcs in $A'$ have a common vertex. We say that $A'$ is a matching from $X=\{u_1,u_2,\ldots{},u_r\}$ to $Y=\{v_1,v_2,\ldots{},v_r\}$. Suppose below that $P = x_1x_2 \cdots x_t$ is a directed path of $D$. Then we say that  $x_1$ (resp. $x_t$) is  the \textbf{initial} (resp. \textbf{terminal}) vertex of $P$ and say that $P$ is an $(x_1,x_t)$-path. The vertices $x_2\ldots{},x_{t-1}$ are the \textbf{interior} vertices of $P$. The \textbf{length} of $P$ is the number of its arcs, and we denote a path of length $l$ as an $l$-path. Furthermore, if $P$ is as above and $z$ is not a vertex of $P$ such that  $zx_1$ is an arc of $D$, then $zx_1P$ represents a new path from $z$ to $x_1$ and then along $P$ to $x_t$. We  denote by  \textbf{$P[x, y]$} the subpath of $P$ from $x$ to $y$. Furthermore, $P$ is \textbf{minimal} if, for every $(x_1, x_t)$-path $Q$, either $V(P) = V(Q)$ or $V(Q)$ is not properly contained in $V(P)$. Two paths are \textbf{independent} if neither contains an interior vertex of the other. Given a collection of paths $\mathcal{Q}$, we use $\text{Ini} (\mathcal{Q})$ to represent the set of all initial vertices of paths in $\mathcal{Q}$, $\text{Ter} (\mathcal{Q})$ to represent the set of all terminal vertices of paths in $\mathcal{Q}$, and $\mathbf{\text{Int}(\mathcal{Q})} = V(\mathcal{Q}) \setminus (\text{Ini} (\mathcal{Q}) \cup \text{Ter} (\mathcal{Q}))$.

\subsection{Key Definitions and Lemmas}
In our proofs, we utilize the following easy consequence of Menger's Theorem.
\begin{theorem}\label{menger}
\cite{menger} Let $D$ be a $k$-strong digraph. Then for any two disjoint sets $\{x_1, \ldots, x_k\}$ and $\{y_1, \ldots, y_k\}$ of $V(D)$, there exist disjoint $(x_i, y_{\pi(i)})$-paths for some permutation $\pi$ of $\{1, 2, \ldots, k\}$.
\end{theorem}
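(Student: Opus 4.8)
The plan is to reduce the statement to the classical vertex version of Menger's theorem by attaching a single source and a single sink. First I would build an auxiliary digraph $D'$ from $D$ by adding two new vertices $s$ and $t$ together with all arcs $sx_i$ $(i\in[k])$ and all arcs $y_jt$ $(j\in[k])$, and no other arcs. Any set of $k$ disjoint $(x_i,y_{\pi(i)})$-paths in $D$ corresponds exactly to a set of $k$ internally disjoint $(s,t)$-paths in $D'$ (prepend $s$ and append $t$, and conversely delete them), so it suffices to produce $k$ internally disjoint $(s,t)$-paths in $D'$.

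By Menger's theorem in the form counting internally disjoint $(s,t)$-paths, the maximum number of such paths equals the minimum cardinality of a set $S\subseteq V(D')\setminus\{s,t\}=V(D)$ meeting every $(s,t)$-path. I would therefore rule out any separator $S$ with $|S|\leq k-1$. Suppose for contradiction that $S$ is such an $(s,t)$-separator. Since $x_1,\ldots,x_k$ are $k$ distinct vertices, at least one of them, say $x$, lies outside $S$; likewise at least one $y_j$, say $y$, lies outside $S$, and $x\neq y$ because $\{x_1,\ldots,x_k\}$ and $\{y_1,\ldots,y_k\}$ are disjoint.

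The key point is that $D\setminus S$ is strongly connected: $D$ is $k$-strong and $|S|\leq k-1$, so deleting $S$ leaves a strong digraph, which is moreover nonempty as it contains $x$ and $y$. Hence $D\setminus S$ contains a directed $(x,y)$-path $P$, and concatenating the arc $sx$, the path $P$, and the arc $yt$ yields an $(s,t)$-path in $D'$ avoiding $S$, contradicting the choice of $S$. Therefore every $(s,t)$-separator has size at least $k$, and Menger's theorem delivers $k$ internally disjoint $(s,t)$-paths.

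Finally I would read off the permutation. Each of the $k$ paths begins with an arc $sx_i$ and ends with an arc $y_jt$; internal disjointness forces the second vertices to be $k$ distinct members of $\{x_1,\ldots,x_k\}$ and the penultimate vertices to be $k$ distinct members of $\{y_1,\ldots,y_k\}$, so both $k$-sets are exhausted bijectively. Deleting $s$ and $t$ then produces $k$ pairwise disjoint paths, the one starting at $x_i$ terminating at $y_{\pi(i)}$ for a well-defined permutation $\pi$ of $\{1,\ldots,k\}$. There is no serious obstacle here; the only points requiring care are checking that $D\setminus S$ is both strong and nonempty (which uses $|S|\leq k-1$ together with the fact that some $x_i$ and some $y_j$ survive), and verifying that the extracted paths genuinely realize a permutation rather than merely a partial matching of the two $k$-sets.
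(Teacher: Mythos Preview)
Your proposal is correct and is the standard reduction to the vertex form of Menger's theorem. Note that the paper does not actually give a proof of this statement: it is quoted with a citation as ``the following easy consequence of Menger's Theorem,'' so there is no in-paper argument to compare against; your source--sink construction is exactly the kind of derivation the phrase ``easy consequence'' is pointing to.
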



\begin{definition}\label{def1}
 Let $c \in \mathbb{N}$, and let $u, v$ be two vertices of a digraph $D$. We say $v$ is \textbf{$\boldsymbol{c}$-good for $\boldsymbol{u}$} in $D$ if either $v$ dominates $u$ or there exist at least $c$ independent $(v, u)$-paths of length 2 in $D$.
\end{definition}

\begin{definition}\label{def2}
 A vertex $u$ is a \textbf{nearly in-dominating  vertex} of $D$ if, for every $c \in \mathbb{N}$, all but at most $2c$ vertices are $c$-good for $u$.\\
  We call a set $U$ of vertices  a \textbf{nearly in-dominating  set} of $D$ if, for every vertex $u\in U$ and every $c \in \mathbb{N}$, all but at most $2c$ vertices in $D \setminus U$ are $c$-good for $u$ in $D$.
\end{definition}

The following lemma, which plays a crucial role in our proofs, establishes the existence of a nearly in-dominating  vertex in a semicomplete digraph.
\begin{lemma}\label{lemma8}
Every semicomplete digraph $D$ contains a nearly in-dominating  vertex $u$.
\end{lemma}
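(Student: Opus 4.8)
The plan is to find the vertex $u$ by a maximality argument on in-degrees, mimicking Landau's classical proof that a king exists in a tournament, but squeezing out the stronger quantitative statement about $2$-paths. Concretely, I would choose $u$ to be a vertex of maximum in-degree in $D$ (ties broken arbitrarily), set $B = N_D^-(u)$ and $A = V(D) \setminus (\{u\} \cup B)$, so $A$ consists of the vertices that do \emph{not} dominate $u$. Since $D$ is semicomplete, for every $a \in A$ there is at least one arc between $a$ and $u$, and it cannot be $au$, so $u \to a$; thus $A \subseteq N_D^+(u)$, and moreover every arc between $A$ and $u$ goes from $u$ to $A$. The goal is to show that, for each $c \in \mathbb{N}$, all but at most $2c$ vertices of $A$ have at least $c$ independent $(v,u)$-paths of length $2$ (vertices of $B$ are fine for free since they dominate $u$, and $u$ itself is excluded).

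The key step is a counting/extremal argument. For $v \in A$, an independent family of $(v,u)$-paths of length $2$ is the same as a family of distinct midpoints $w$ with $v \to w \to u$, i.e. a subset of $N_D^+(v) \cap B$; so $v$ has at least $c$ independent such paths iff $|N_D^+(v) \cap B| \geq c$. Hence I must bound the number of ``bad'' vertices $v \in A$ with $|N_D^+(v) \cap B| \leq c - 1$. Suppose $v \in A$ is bad. Because $D$ is semicomplete and $v \notin B \cup \{u\}$, every vertex of $B$ is adjacent to $v$; the ones not in $N_D^+(v)$ are in $N_D^-(v)$, so $|N_D^-(v) \cap B| \geq |B| - (c-1)$. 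Also $v \to u$ fails and $u \to v$, so $u \notin N_D^-(v)$, while $u \in N_D^-(u)$ is false too — the point is to compare $d_D^-(v)$ with $d_D^-(u) = |B|$. We have $N_D^-(v) \supseteq (N_D^-(v) \cap B)$ and in addition, since $D$ is semicomplete, $v$ is adjacent to every other bad vertex and to every vertex of $A$; I want to show that if there were $2c$ bad vertices one of them would end up with in-degree exceeding $|B|$, contradicting the choice of $u$. The cleanest way: let $A'$ be the set of bad vertices; any $v \in A'$ has $|N_D^-(v) \cap B| \geq |B| - c + 1$, and $v$ is non-adjacent to at most... (it is adjacent to all of $V(D)\setminus\{v\}$). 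Among the vertices of $A' \setminus \{v\}$, at least half are in-neighbours of \emph{some} fixed $v$ after averaging, which pushes that $v$'s in-degree to at least $(|B| - c + 1) + \lceil (|A'|-1)/2 \rceil$; forcing this to be $\leq |B|$ yields $|A'| \leq 2c - 1 < 2c$. (I would state this averaging precisely: $\sum_{v \in A'} |N_D^-(v) \cap (A'\setminus\{v\})| = |A(D\langle A'\rangle)| \geq \binom{|A'|}{2}$ since $D\langle A'\rangle$ is semicomplete, so some $v \in A'$ has $|N_D^-(v)\cap A'| \geq (|A'|-1)/2$.)

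So the skeleton is: (1) pick $u$ of maximum in-degree, define $A$, $B$; (2) observe $B$-vertices dominate $u$ hence are $c$-good for every $c$; (3) reduce $c$-goodness of $v \in A$ to $|N_D^+(v)\cap B| \geq c$; (4) let $A'_c = \{v \in A : |N_D^+(v)\cap B| \leq c-1\}$, show via the semicompleteness of $D\langle A'_c \rangle$ and an averaging argument that some $v \in A'_c$ would have $d_D^-(v) \geq |B| - (c-1) + \tfrac{|A'_c|-1}{2}$, hence by maximality $|A'_c| \leq 2c - 1$; (5) conclude $u$ is nearly in-dominating since the at-most-$(2c-1)$ bad vertices all lie in $A$.

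The main obstacle I anticipate is getting the constant right in step (4): I need the bad vertices' in-neighbourhoods inside $B$ and inside $A'_c$ to be genuinely \emph{disjoint} contributions (they are, since $B \cap A'_c = \emptyset$), and I must be careful that a bad vertex might also receive arcs from $A \setminus A'_c$ or even have $u$ as a non-in-neighbour — none of which hurts the lower bound on $d_D^-(v)$, they only help, so the inequality $d_D^-(v) \leq d_D^-(u)$ is safe. A secondary subtlety: ``independent paths of length $2$'' as defined requires the paths to share no interior vertex but they \emph{do} share the endpoints $v$ and $u$; since length-$2$ paths have a single interior vertex, distinctness of midpoints is exactly independence, so this matches Definition~\ref{def1}. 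If one wanted the full richness hinted at in the abstract (many paths for almost all out-neighbours), the same argument with $c$ ranging shows the ``bad'' set shrinks to $2c-1$, which is the quantified statement of Definition~\ref{def2}; no further work is needed beyond bookkeeping.
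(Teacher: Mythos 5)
Your proposal is correct and follows essentially the same argument as the paper: choose $u$ of maximum in-degree, reduce $c$-goodness of a non-dominating vertex $v$ to $|N^+(v)\cap N^-(u)|\geq c$, and bound the bad set by observing that it induces a semicomplete digraph, which forces some bad vertex to have in-degree exceeding that of $u$ unless the bad set has size at most $2c-1$. The only cosmetic differences are that the paper works inside a spanning tournament $T$ and phrases the final contradiction via the minimum out-degree of $T\langle S\rangle$ rather than via your in-degree averaging; these are equivalent.
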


\begin{proof}
Fix $c \in \mathbb{N}$, and let $T$ be any spanning tournament of $D$. We select $u$ to be a vertex of maximum in-degree in $T$  and proceed to prove that $u$ is a nearly in-dominating  vertex of $D$. Let
\[
S = \{w \in N^+_T(u) \mid w \text{ is not $c$-good for $u$ in } D\}.
\]
We may assume that $|S| > 2c$ as otherwise the claim would already be established. By the definition  of $S$, for every vertex $w \in S$,  we have $u \rightarrow w$ and $T$ contains no collection of $c$ independent $(w, u)$-paths of length 2. Moreover, by
the choice of $u$, we have  $d^-_T(w) \leq d^-_T(u)$.

We claim that, except for up to $c$ vertices, every vertex in $T$ is either an in-neighbour of $u$ or an out-neighbour of $w$. This follows from the following calculation:
\begin{equation*}
\begin{aligned}
|N^+_{T}(w) \cup N^-_{T}(u)|
&= |N^+_{T}(w)|+| N^-_{T}(u)|-|N^+_{T}(w) \cap N^-_{T}(u)|\\
&\geq |T|-1-d^-_{T}(w)+d^-_{T}(u)-(c-1)\\
&\geq |T|-1-c+1\\
&\geq |T|-c.
\end{aligned}
\end{equation*}
It follows from $u \rightarrow S$ that $d^+_{S}(w) \geq |S| - c$. Since $w$ is an arbitrary vertex of $S$ we have $\delta^+(T\langle S\rangle) \geq |S| - c$. However, this is impossible because
\begin{equation*}
\begin{aligned}
|S| &\geq 2\delta^+(T\langle S\rangle) + 1 \geq 2|S| - 2c + 1,
\end{aligned}
\end{equation*}
which implies that $2c \geq |S| + 1 > 2c + 1$. This completes the proof.
\end{proof}



An \textbf{in-king} in a tournament $T$ is a vertex $v$ such that for every other vertex $u\in V(T)$, there is a path from $u$ to $v$ of length at most 2. Landau \cite{Landau(1953)} proved that the vertex with the maximum in-degree in every tournament is an in-king. Actually, Lemma \ref{lemma8} shows that this in-king is also a nearly in-dominating vertex, thereby {significantly strengthening the properties of those in-kings which are also vertices of maximum in-degree}.

\begin{definition}\label{def3}
Let $\gamma$ be an integer, and $D$ be a digraph with a vertex $v$ and a subset $U$ of $V(D)\setminus \{u\}$, we say that the vertex $v$ is a:
\begin{itemize}
    \item \textbf{$\boldsymbol{\mathbf{\gamma}}$-out-dominator of $\boldsymbol{U}$} if $v$ has at least $\gamma$ out-neighbours in $U$; and
    \item \textbf{$\boldsymbol{\gamma}$-in-dominator of $\boldsymbol{U}$} if $v$ has at least $\gamma$ in-neighbours in $U$.
\end{itemize}
\end{definition}

 The following property, which has been used in many recent papers on linkages, was introduced in \cite{Pokrovskiy(2015)}. Let $ D $ be a digraph, and let $ X = \{x_1, \ldots, x_k\} $ and $ Y = \{y_1, \ldots, y_k\} $ be disjoint subsets of $ V(D) $. We say $ \boldsymbol{X} $ \textbf{anchors} $ \boldsymbol{Y} $ if there exist $ k $ disjoint $ (x_i, y_{\pi(i)}) $-paths for every permutation $ \pi $ of $ [k] $. Furthermore, if these $ k $ paths have lengths at most 3, we say that $ \boldsymbol{X} $ \textbf{short anchors}  $ \boldsymbol{Y }$.

 The following lemma plays a central role in the proof of Theorem \ref{the0} and will be used repeatedly. It establishes that, under specific structural conditions, a subset can short anchor a specific subset while avoiding another subset $W$ within a digraph. Recall that for a collection $\mathcal{Q}$ of paths, $\text{Ini} (\mathcal{Q})$ denotes the set of all initial vertices of the paths in $\mathcal{Q}$.

\begin{lemma}\label{lem2.2}
Let $D$ be a digraph with three disjoint subsets $X$, $Y$ and $W$ of $V(D)$ such that $|X| = |Y| = k$. Suppose that $U$ is a nearly in-dominating set of $D\setminus (X\cup Y)$ of order $3k$, and $\mathcal{Q}$ is a collection of $k$ disjoint paths from $U$ to $Y$ with the minimum number of vertices among all such collections. If there exists a set $A\subseteq V(D)\setminus (W\cup \text{Ini} (\mathcal{Q}))$ of order at most $k$ such that each vertex $a\in A$ has at least $7k+3|W|+7|A|$ out-neighbors in $ D \setminus (X \cup Y \cup U) $ each of which is a 1-in-dominator of $ U\setminus \text{Ini}  (\mathcal{Q})$,  then $A$ short anchors every subset $S\subseteq \text{Ini}  (\mathcal{Q})\setminus W$ for which $|S|=|A|$ in the subdigraph induced by the vertices in $ (V(D)\setminus (V(\mathcal{Q})\cup W \cup X))\cup  (S\cup A)$.
\end{lemma}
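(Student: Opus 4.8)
The plan is to prove the statement for one arbitrary target bijection between $A$ and $S$; after relabelling we may assume we must link $a_i$ to $s_i$, where $A=\{a_1,\dots,a_m\}$, $S=\{s_1,\dots,s_m\}$ and $m=|A|$. Write $D^{*}$ for the induced subdigraph on $(V(D)\setminus(V(\mathcal{Q})\cup W\cup X))\cup(S\cup A)$. Each linking path will have the form $a_i\to b_i\to c_i\to s_i$ (or a shortening of it), where $b_i$ is one of the guaranteed out-neighbours of $a_i$ and the tail $b_i\to c_i\to s_i$ (or the arc $b_i\to s_i$) is supplied by the nearly-in-dominating property of $U$ applied to $s_i$, which lies in $\text{Ini}(\mathcal{Q})\subseteq U$.

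First I would record the structural consequences of the minimality of $\mathcal{Q}$. Since $\mathcal{Q}$ uses as few vertices as possible, no path of $\mathcal{Q}$ meets $U$ outside its own initial vertex. Using in addition that every candidate out-neighbour $b$ of $a_i$ lies outside $X\cup Y\cup U$ and has an in-neighbour $u'\in U\setminus\text{Ini}(\mathcal{Q})$, one gets: (i) if $b$ lies on some $Q\in\mathcal{Q}$ then $b$ must be the second vertex of $Q$ (otherwise replacing $Q$ by $u'bQ[b,\cdot]$ strictly lowers the total vertex count), so at most $k$ candidate out-neighbours of $a_i$ meet $V(\mathcal{Q})$; and (ii) if $b$ avoids $V(\mathcal{Q})$ and $b\to c\to s_i$ with $c\in V(\mathcal{Q})$, then $c$ is one of the first three vertices of its path in $\mathcal{Q}$ (otherwise $u'bcQ[c,\cdot]$ is a cheaper replacement), so at most $3k$ of the length-$2$ connectors from such a $b$ to $s_i$ have their middle vertex in $V(\mathcal{Q})$.

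Next I would choose the $b_i$. Since $s_i\in U$ and $U$ is nearly in-dominating in $D\setminus(X\cup Y)$, for any parameter $c$ all but at most $2c$ vertices of $(D\setminus(X\cup Y))\setminus U$ are $c$-good for $s_i$; by (i) and the fact that candidates avoid $X\cup Y\cup U$, each $a_i$ still has at least $7k+3|W|+7m-k-|W|-m-2c$ candidate out-neighbours that in addition avoid $W\cup A$ and are $c$-good for $s_i$. For a suitable $c$ of order $3k+|W|+O(m)$ this stays at least $m$ for every $i$, so a system of distinct representatives (Hall's theorem) supplies pairwise distinct $b_i$. Indices with $b_i\to s_i$ already give paths $a_ib_is_i$. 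For the remaining index set $J$, each $b_i$ offers at least $c$ admissible middle vertices $c_i$ with $b_i\to c_i\to s_i$; by (ii) at most $3k$ of these lie on $V(\mathcal{Q})$, and I must also avoid $W$ together with the $O(m)$ vertices committed to the other paths and to the $b_j$. If $c$ is a little larger than $3k+|W|+O(m)$, a second system-of-distinct-representatives argument yields pairwise distinct $c_i$ disjoint from everything, completing the length-$3$ paths. Every vertex used is legal in $D^{*}$: the $b_i$ avoid $X\cup Y\cup U\cup V(\mathcal{Q})\cup W$, the $c_i$ lie in $(D\setminus(X\cup Y))\setminus(V(\mathcal{Q})\cup W)$, and the endpoints lie in $S\cup A$.

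The main obstacle is the calibration of the single parameter $c$: forcing $b_i$ to be $c$-good for $s_i$ costs $2c$ out-neighbours of $a_i$, while the length-$2$ tail needs $c$ larger than $3k+|W|$ plus the $O(m)$ vertices used by the other paths, and fitting both demands inside $7k+3|W|+7m$ is exactly where the accounting must be sharp. I would try to make this compatible by using Hall's theorem rather than a one-at-a-time greedy choice (so that no "$i-1$ previously used vertices" loss appears), by observing that a connector's middle vertex cannot coincide with the endpoints of its own path, and by absorbing lower-order terms via $m\le k$. A secondary point is excluding degenerate coincidences — for instance a connector's middle vertex equalling the in-neighbour $u'$ of (ii), or a chosen $b_i$ equalling some $a_j$ — but these fall out of the same counting once the constants are fixed.
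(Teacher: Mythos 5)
Your proposal follows essentially the same route as the paper's proof: the linking paths have the form $a_i a_i^{+}(a_i^{++})q_i$, the tails are supplied by the nearly-in-dominating property of $U$ applied to $q_i\in \text{Ini}(\mathcal{Q})\subseteq U$ with a goodness parameter of order $3k+|W|+3|A|$, and the minimality of $\mathcal{Q}$ combined with the in-neighbour of $a_i^{+}$ in $U\setminus \text{Ini}(\mathcal{Q})$ confines any intersection with $V(\mathcal{Q})$ to the second and third layers $Y_2,Y_3$ — exactly your observations (i) and (ii), which the paper states as an a posteriori verification rather than an a priori bound on bad candidates. The calibration you flag does close: the paper takes $c=3k+|W|+3|A|$, spends $k+|W|+2c$ of the $7k+3|W|+7|A|$ guaranteed out-neighbours to leave $|A|$ admissible choices for each $a_i^{+}$, and the $c$ independent $2$-paths survive the at most $3k+|W|+2|A|$ forbidden middle vertices with $|A|$ to spare.
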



\begin{proof}
  Let $ A = \{a_1, \ldots, a_{|A|}\} $ and $ S = \{q_1, \ldots, q_{|A|}\} $. We need to find $|A|$ disjoint paths $P_1, \ldots, P_{|A|}$ of length at most 3, where each path $P_i$ is an $(a_i, q_i)$-path for $i \in [|A|]$ in the subdigraph induced by the vertices in $ (V(D)\setminus (V(\mathcal{Q})\cup W \cup X))\cup  (S\cup A)$. 

To construct paths $P_1, \ldots, P_{|A|}$, we first choose a set $\{a^+_i, i\in [|A|]\}$ of $|A|$ distinct vertices such that for each $i\in [|A|]$,
\begin{itemize}
\item[(i)] $a_i\rightarrow a^{+}_i$ and $a^{+}_i \in V(D) \setminus (X \cup Y \cup U\cup Y_2 \cup W)$;
\item[(ii)] $a^{+}_i$ is a 1-in-dominator of  $ U\setminus \text{Ini}  (\mathcal{Q})$;
\item[(iii)] $a^{+}_i$ is $(3k + |W| + 3|A|)$-good for $q_i$ in $D\setminus (X\cup Y)$.
\end{itemize}
By the hypothesis of the lemma, for each vertex $a_i\in A$, among the out-neighbors of vertex $a_i\in A$ in $D \setminus (X \cup Y \cup U \cup Y_2 \cup W)$, the number of 1-in-dominators of $U \setminus \text{Ini}(\mathcal{Q})$ is at least $7k + 3|W| + 7|A| - k - |W| = 6k + 2|W| + 7|A|$.  Recall that $ U $ is a nearly in-dominating  set of $ D \setminus (X \cup Y) $ and $q_i\in U$ for all $i\in [|A|]$.
Taking $ c = 3k + |W| + 3|A| $, it follows from Definition \ref{def2} that we can find $6k + 2|W| + 7|A|-2c= |A| $ vertices satisfying (i)-(iii) for each $a_i$. Hence such set $\{a^+_i, i\in [|A|]\}$ exists. For all $i \in [|A|]$, if $a^{+}_i \rightarrow q_i$, then we set $P_i = a_i a^{+}_i q_i$; otherwise, since $a^{+}_i$ is $(3k + |W| + 3|A|)$-good for $q_i$ in $D \setminus (X \cup Y)$, the number of 2-paths from $a^{+}_i$ to $q_i$ that are internally disjoint from $X \cup Y \cup A \cup \text{Ini}(\mathcal{Q}) \cup \bigcup_{i=2}^3 Y_i \cup W \cup \{a^{+}_i , i \in [|A|]\}$ is at least
\[
3k + |W| + 3|A| - |A| - 3k - |W| - |A| \geq |A|,
\]
which implies that there exist $|A|$ distinct vertices $a^{++}_i \notin X \cup Y \cup A \cup \text{Ini}(\mathcal{Q}) \cup \bigcup_{i=2}^3 Y_i \cup W \cup \{a^{+}_i : i \in [|A|]\}$ such that $a^{+}_i a^{++}_i q_i$ is a 2-path. Let $P_i = a_i a^{+}_i a^{++}_i q_i$. Thus we have shown that we can greedily find $|A|$ disjoint paths $P_1, \ldots, P_{|A|}$, each of length at most 3, such that each path $P_i$ is an $(a_i, q_i)$-path and has the form $P_i = a_i a^{+}_i q_i$ or $P_i = a_i a^{+}_i a^{++}_i q_i$.

Next, we show that the collection of paths $\mathcal{P} = \{P_i \mid i \in [|A|]\}$ we constructed in $D \setminus W$ is internally disjoint from $V(\mathcal{Q}) \cup X$. From our construction, it is clear that $V(\mathcal{P}) \cap W = \emptyset$, and $\mathcal{P}$ is internally disjoint from $X \cup \text{Ini}(\mathcal{Q})$. It remains to show that $V(\mathcal{P}) \cap V(\mathcal{Q} \setminus S) = \emptyset$. Assume for contradiction that there exists a path $ P \in \mathcal{P} $ and a path $ Q \in \mathcal{Q} $ such that $ P $ intersects $ Q $ at a vertex in $ V(Q) \setminus S $. By construction, $ P $ is either of the form $ P = a_i a^+_i q_i $ or $ P = a_i a^+_i a^{++}_i q_i $. Because (ii), the vertex $a^+_i$ has an in-neighbour $z\in U \setminus \text{Ini}(\mathcal{Q})$. If $a^+_i \in V(Q)$, then (i) implies that the subpath $Q[q, a^+_i]$ has at least 3 vertices, where $q$ is the initial vertex of $Q$. Replacing the subpath $Q[q, a^+_i]$ by the shorter path $za^+_i$ contradicts the minimality of $|V(\mathcal{Q})|$. Hence, $a^+_i \notin V(\mathcal{Q})$.
This shows that we must have $P=a_ia^+_ia^{++}_iq_i$ and $a^{++}_i \in V(Q)$. Since $a^{++}_i \notin \text{Ini}(\mathcal{Q}) \cup \bigcup_{j=2}^3 Y_j$, the path $za^+_ia^{++}_i$ is shorter than $Q[q, a^{++}_i]$. Replacing the subpath $Q[q, a^{++}_i]$ by the shorter path $za^+_ia^{++}_i$ also contradicts the minimality of $|V(\mathcal{Q})|$. Therefore,  $V(\mathcal{P}) \cap V(\mathcal{Q} \setminus S) = \emptyset$ and the proof is complete.
\end{proof}

The proof of Theorem \ref{the1} requires the following lemma.
\begin{lemma}\label{lem2.3}
\cite{Bang(1999)} Let $D = H[S_1,\ldots,S_h]$ be a composition, where $H$ is a strong digraph on $h\geq2$ vertices. Let $D_0 = H[S^0_{1},\ldots,S^0_{h}]$ be the digraph obtained from $D$ by deleting every arc which lies inside $S_i$, for all $i\in [h]$. If $H$ has at least three vertices, then $D$ is $k$-strong if and only if $D_0$ is $k$-strong.
\end{lemma}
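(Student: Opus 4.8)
The plan is to reduce both directions to the combinatorial structure of the host digraph $H$ together with the sizes $|S_i|$, and to exploit the fact that deleting the arcs inside the parts does not change which vertices a given vertex "sees" across parts. First I would fix notation: for a set $Z \subseteq V(D)$ and a part $S_i$, write $Z_i = Z \cap V(S_i)$, and for a vertex $z$ let $i(z)$ be the index with $z \in V(S_{i(z)})$. The key observation, used repeatedly, is that for $u \in V(S_i)$ and $w \in V(S_j)$ with $i \neq j$, the arc $uw$ lies in $A(D)$ iff $v_iv_j \in A(H)$ iff $uw \in A(D_0)$; only arcs inside a single part differ between $D$ and $D_0$. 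Since $D_0$ is a subdigraph of $D$, the direction "$D_0$ is $k$-strong $\Rightarrow$ $D$ is $k$-strong" is immediate (adding arcs cannot decrease connectivity), so the substance is the forward direction.

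For the forward direction, suppose $D$ is $k$-strong; I want to show $D_0$ is $k$-strong, i.e.\ $|D_0| = |D| \geq k+1$ (clear, since $D$ is $k$-strong) and for every $X \subseteq V(D_0)$ with $|X| \leq k-1$ the digraph $D_0 \setminus X$ is strongly connected. Fix such an $X$ and take any ordered pair $(x,y)$ of distinct vertices of $D_0 \setminus X$; I must build an $(x,y)$-path avoiding $X$ that uses no arc inside a part. The idea is to first route a path $P$ from $x$ to $y$ in $D \setminus X$ (which exists since $D$ is $k$-strong) and then "repair" every maximal subpath of $P$ that travels inside a single part $S_i$ by rerouting through a third part. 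Concretely, suppose $P$ contains a segment $a \to \dots \to b$ entirely inside $S_i$ with $a \neq b$. Because $H$ is strong on $h \geq 3$ vertices, $v_i$ has both an in-neighbour and an out-neighbour in $H$, and moreover $H \setminus v_i$... — here one has to be careful: we need a vertex $v_j$, $j \neq i$, together with a vertex $w \in V(S_j) \setminus X$ such that $a \to w \to b$ in $D_0$. Such a $w$ exists provided there is an index $j \neq i$ with $v_iv_j, v_jv_i \in A(H)$ (a digon at $v_i$ in $H$) and $|S_j| > |X|$, which we cannot assume in general; so a cleaner approach is needed.

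The cleaner approach, and the step I expect to be the main obstacle, is to argue directly at the level of $H$. Define a "contracted" deletion set: for each $i$, if $X$ contains all of $V(S_i)$ then $v_i$ is "killed"; the collection of killed indices has size at most $\lfloor (k-1)/\min_i|S_i| \rfloor$, which is at most $k-1$ but could be much smaller, and this is exactly the slack we exploit. One shows: (a) if $v_i$ is not killed, then the vertices of $V(S_i) \setminus X$ all lie in one strong component of $D_0 \setminus X$, because any two of them have a common in- and out- behaviour across parts and, since $H$ is strong with $h \geq 3$, there is a directed path in $H$ from $v_i$ back to $v_i$ through at least one other surviving vertex $v_j$ whose part retains a vertex (here one uses that the number of killed $v_j$ is small and $H$ being strong on $\geq 3$ vertices has enough "room" — this is where I would invoke a counting argument comparing $|X|$ with the sizes of parts along a chosen closed walk in $H$); and (b) the quotient relation on surviving indices induced by reachability in $D_0 \setminus X$ is all of the surviving index set, again because $H$ minus the killed vertices is still strong (killing $v_i$ in $H$ corresponds to deleting $\geq |S_i| \geq \min_j |S_j|$ vertices of $X$, and $H$ on $\geq 3$ vertices being strong is robust enough) — alternatively one simply cites that $D \setminus X$ is strong and transfers strong connectivity across parts verbatim to $D_0 \setminus X$ since inter-part arcs are identical. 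Combining (a) and (b) gives that $D_0 \setminus X$ is strong. The delicate point throughout is handling parts $S_i$ that are entirely or almost entirely inside $X$, and verifying that the hypothesis $h \geq 3$ (as opposed to $h = 2$, where the statement genuinely fails because a part can only be "bypassed" via itself) is used exactly where a third part is needed to reroute; I would isolate that use in one explicit claim and prove it by exhibiting, for any two surviving indices, a directed $v_i$–$v_j$ walk in $H$ each of whose intermediate vertices has a surviving part vertex, which follows from strong connectivity of $H$ together with the bound on the number of fully-deleted parts.
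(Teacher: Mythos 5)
Your easy direction is fine, and your instinct that the first rerouting attempt fails is also correct. But the ``cleaner approach'' does not close the gap: your claim (a) --- that the surviving vertices of a part $S_i$ all lie in one strong component of $D_0\setminus X$ because only few parts can be killed and $H$, being strong on at least three vertices, is ``robust enough'' --- is false, and no counting argument of the kind you sketch can rescue it, because killing a part $S_j$ costs only $|S_j|$ vertices of $X$ and every part other than $S_i$ may be a singleton. Concretely, let $H$ be the complete digraph on three vertices, let $S_1$ be the complete digraph on $n$ vertices, and let $S_2,S_3$ be single vertices $s_2,s_3$. Then $D=H[S_1,S_2,S_3]$ is the complete digraph on $n+2$ vertices, hence $(n+1)$-strong, whereas in $D_0$ every vertex of $S_1$ has out-neighbourhood exactly $\{s_2,s_3\}$, so $\kappa(D_0)\le 2$. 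Taking $X=\{s_2,s_3\}$ kills both other parts and separates any two vertices of $S_1$ in $D_0$; this is exactly the configuration your argument must exclude and cannot. The same example shows that the statement as printed (for arbitrary compositions with $H$ strong on at least three vertices) fails for every $k\ge 3$, so the gap is not one of technique. Your fallback inside (b) --- ``transfer strong connectivity verbatim since inter-part arcs are identical'' --- is also unsound, since a path of $D\setminus X$ joining two parts may travel along intra-part arcs that are absent from $D_0$.

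For comparison: the paper gives no proof of this lemma at all; it is quoted from \cite{Bang(1999)}, where the corresponding statement is applied to the canonical decomposition of a quasi-transitive digraph, in which every part on more than one vertex is non-strong. Some restriction of this kind on the parts (or a lower bound on $|V(D)\setminus V(S_i)|$ in terms of $k$) is genuinely needed: if $S_i$ is non-strong it contains a pair $u,w$ with no $(u,w)$-path inside $S_i$, so a small vertex set outside $S_i$ that separates vertices of $S_i$ in $D_0$ already witnesses low connectivity of $D$ itself, which is precisely what the counterexample above exploits the absence of. If you want a correct proof, you should first restore the missing hypothesis and then make claim (a) precise using it; as written, the step ``there is a closed walk in $H$ through a surviving part'' is the irreparable point of failure.
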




\section{Proof of Theorem \ref{the0}}
In this section, we let $D$ be a $3k$-strong semicomplete digraph with $\delta^+(D)\geq 22 k$, and assume $X=\{x_1,\ldots,x_k\}$ and $Y=\{y_1,\ldots,y_k\}$ are disjoint subsets of $V(D)$. We first provide a sketch of the proof for Theorem \ref{the0}.

\textbf{Sketch of proof of Theorem \ref{the0}.}
Construct a nearly in-dominating  set $U=\{u_i \mid i\in [3k]\}$ of $D\setminus (X\cup Y)$ by iteratively applying Lemma $\ref{lemma8}$ to $D\setminus (X\cup Y \cup \bigcup_{j<i} \{u_j\})$. We  {then} partition the set $X$ into $X_1$ and $X_2$ such that  {a matching from $X_1$ to a set $X^*_1$ can be easily found}. By Menger's Theorem, we  {can find}  a collection $\mathcal{Q}$ of $k$ disjoint paths from $U$ to $Y$ that minimizes $|V(\mathcal{Q})|$ while avoiding the vertices in  {$X_1\cup X^*_1$}.  {Using the matching from $X_1$ to  {$X^*_1$,} a collection $\mathcal{P}_1$ of disjoint 2-paths from $X_1$ to $U\setminus \text{Ini}(\mathcal{Q})$ that avoids   {all}  vertices in $V(\mathcal{Q})$ can be easily found}. By applying Lemma $\ref{lem2.2}$, we can link $X_2$ to the corresponding set $S\subseteq \text{Ini}(\mathcal{Q})$ in $(D\setminus (V(\mathcal{P}_1)\cup V(\mathcal{Q})))\cup S$. Finally, using Lemma $\ref{lem2.2}$ again, we find disjoint paths to link $\text{Ter}(\mathcal{P}_1)$ with $\text{Ini}(\mathcal{Q})\setminus S$.  {Thus, we complete the proof (see Fig. \ref{fig6}).}
\hfill $\square$

 \begin{figure}[htbp]
	\centering
	\begin{minipage}{0.8\linewidth}
		\centering
		\includegraphics[width=0.8\linewidth]{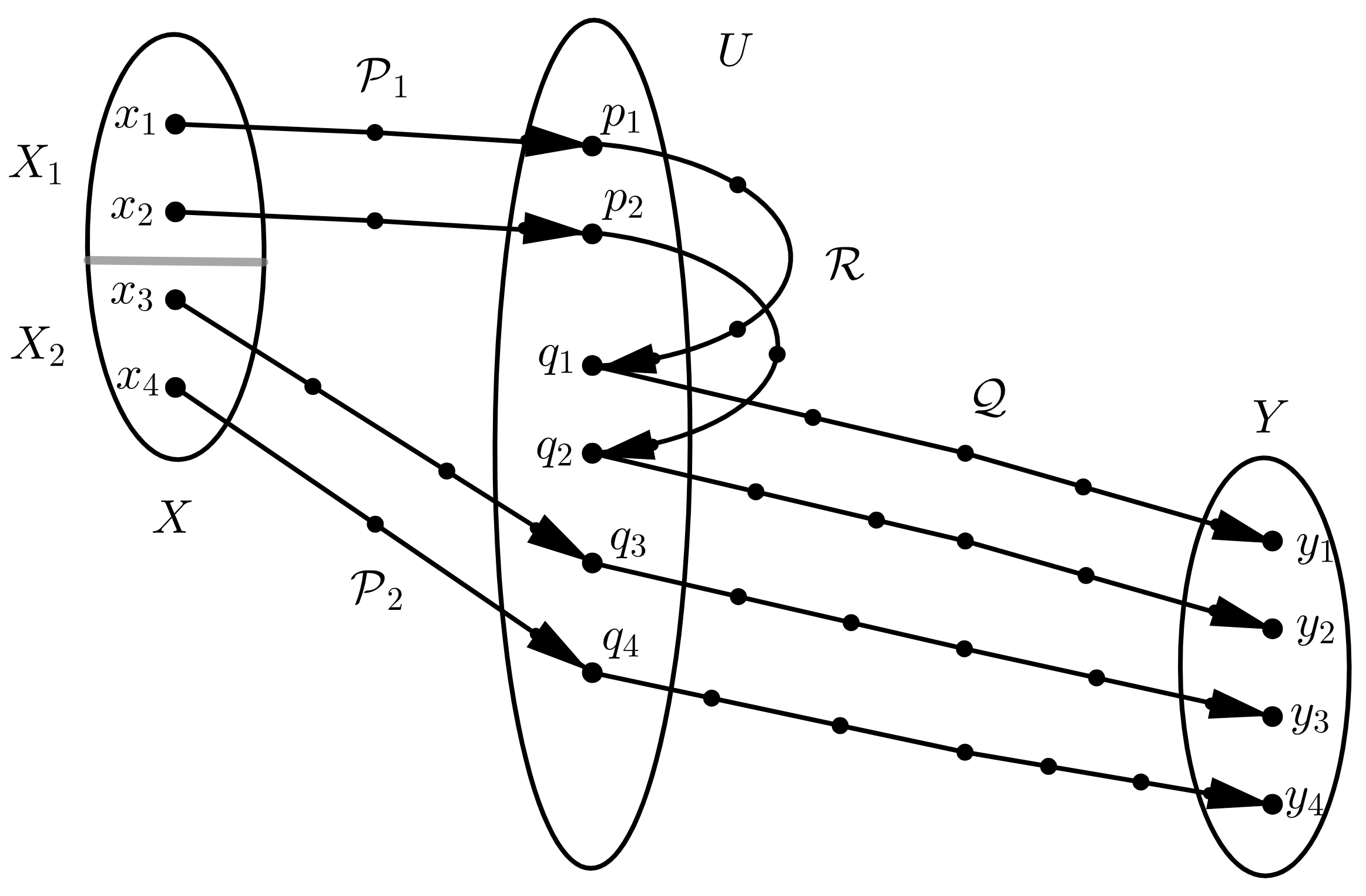}
		\caption{The basic structure of the linking paths for $k=4$.}\label{Fig1}
		\label{fig6}
	\end{minipage}
\end{figure}


\textbf{Proof of Theorem \ref{the0}.}
By iteratively applying Lemma \ref{lemma8} with $D := D \setminus (X \cup Y \cup \bigcup_{j < i} \{u_j\})$, we obtain a set  {$U = \{u_i \mid i \in [3k]\}$ such that   $u_i$
  is a nearly in-dominating  vertex of $D \setminus (X \cup Y \cup \bigcup_{j < i} \{u_j\})$}.
We divide $X$ into two parts: $X_1$ and $X_2$. Set $X_1 = \{x_i \in X \mid N^+_D(x_i) \setminus (X \cup Y \cup U) \text{ contains at least }  {k}  {\text{ distinct } 2k\text{-out-dominators of } U}\}$, and $X_2 = X \setminus X_1$. Without loss of generality, we can assume that $X_1 = \{x_1, \ldots, x_l\}$ and $X_2 = \{x_{l+1}, \ldots, x_k\}$. It follows from the definition of $X_1$ that we can greedily construct a matching $\{x_i x^+_i \mid i \in [l]\}$ such that each $x^+_i\in N^+_D(x_i) \setminus (X \cup Y \cup U) $ is a $2k$-out-dominator of $U$.

Since $D$ is $3k$-strong, by Menger's Theorem, we can find $k$ disjoint paths from $U$ to $Y$ which avoid all vertices in $X \cup \{x^+_1, \ldots, x^+_l\}$. Choose a collection $\mathcal{Q}$ of such paths that minimizes $|V(\mathcal{Q})|$. This implies that only the initial vertices in $\mathcal{Q}$ belong to $U$. Denote by $Y_i$ the set of all $i$-th vertices on paths in $\mathcal{Q}$ for  {$i \geq 2$} and for each $i\in [k]$, let $q_i$ be the initial vertex of the path from $\mathcal{Q}$ which ends in $y_i$. 

We now aim to construct pairwise disjoint paths linking $x_i$ to $q_i$ for $i \in [k]$. Since each $ x^+_i $ is a $ 2k $-out-dominator of $ U $ and  {$|\text{Ini}(\mathcal{Q})|= k $} for all $ i \in [l] $, we can obtain a collection $ \mathcal{P}_1 $ of $ l $ disjoint 2-paths from $ X_1 $ to $ U \setminus \text{Ini}(\mathcal{Q}) $, such that these 2-paths are internally disjoint from $ V(\mathcal{Q}) $. We then show that there exists a collection  {$\mathcal{P}_2=\{P_{l+1},\ldots{},P_k\}$ such that $P_j$ is an $(x_j,q_j)$-path of length at most 3 which avoids all vertices of $V(\mathcal{P}_1)\cup V(\mathcal{Q})$ for each  {$j\in [l+1,k]$.}}

 {Note that for every vertex $v\notin U$, if $v$ does not a $2k$-out-dominator of $U$, then $v$ is a $(k + 1)$-in-dominator of $U$ since $|U| = 3k$.} By the  {definition} of $X_2$,  {for each $i \in [k] \setminus [l]$, the number of $(k+1)$-in-dominators of} $U$ in $N^+_D(x_i) \setminus (X \cup Y \cup U)$ is at least
\[
\delta^+(D) - |X \cup Y \cup U| - (k-1) \geq 16k > 7k + 3l + 7(k - l).
\]
 {Note that each 
such vertex is a 1-in-dominator of $U \setminus \text{Ini}(\mathcal{Q})$}. Hence, the desired collection $\mathcal{P}_2$ can be found by applying Lemma \ref{lem2.2} with $A = X_2$,  {$S=\{q_{l+1},\ldots,q_k\}$} and $W = \text{Int}(\mathcal{P}_1)$.

Let $p_i$ be the terminal vertex of the path starting at $x_i$ in $\mathcal{P}_1$. The proof is completed by finding  {a set $\mathcal{R}$ of} $l$ disjoint paths linking $p_i$ with $q_i$ for $i \in [l]$ in $(D \setminus  {(}V(\mathcal{Q}) \cup V(\mathcal{P}_1) \cup V(\mathcal{P}_2) {)} )\cup \{p_1, \ldots, p_l, q_{1}, \ldots, q_l\}$. We achieve this purpose by using Lemma \ref{lem2.2} once again. Notice that $ |\text{Int}(\mathcal{P}_1 \cup \mathcal{P}_2) |\leq 2k-l$.  {Observe that} each vertex in $\{p_1, \ldots, p_l\}$ has at least $\delta^+(D) - |X \cup Y \cup U| \geq 17k \geq 7k + 3(2k - l) + 7l$ out-neighbours in  {$D\setminus (X \cup Y \cup U)$}, and each out-neighbour of $p_i$ is a 1-in-dominator of $U \setminus \text{Ini}(\mathcal{Q})$ by definition. Hence, we can apply Lemma \ref{lem2.2} with $A = \{p_1, \ldots, p_l\}$,  {$S=\{q_1,\ldots,q_l\}$} and $W = \text{Int}(\mathcal{P}_1 \cup \mathcal{P}_2)$ to find the desired paths.

By combining the corresponding paths in $\mathcal{P}_1$ and $\mathcal{P}_2$, we can find $k$ disjoint $(x_i, y_i)$-paths for $i \in [k]$, as desired  {(see Fig. \ref{Fig1})}.
\hfill $\square$

\section{Proof of Theorem \ref{the1}}
 {Let }$D = H[S_1, \ldots, S_h]$  {be} a semicomplete composition  {satisfying the conditions in Theorem \ref{the1}.} 
 We proceed by induction on $k$. The base case $k = 1$ is trivial. Assume that $k \geq 2$ and  {that} the theorem holds for $k - 1$. We now show that it holds for $k$. Suppose that $D$ is not $k$-linked, that is,  {there exists a pair of disjoint vertex sets $X = \{x_1, \ldots, x_k\}$ and $Y = \{y_1, \ldots, y_k\}$ such that $D$ has  no collection of  $k$ disjoint paths linking $x_i$ to $y_i$ for $i \in [k]$}.  {Suppose first that there is an index $i\in [k]$ such that $x_i$ dominates $y_i$. }Since $D \setminus \{x_i, y_i\}$ is a $(3k - 2)$-strong semicomplete composition with $| { {D}} \setminus ( {S_j} \cup \{x_i, y_i\})| \geq 2(k - 1) - 3$ for all  {$j \in [h]$} and $\delta^+(D \setminus \{x_i, y_i\}) \geq  {23k} - 2$,  {it follows from the induction assumption that} the digraph $ D \setminus \{x_i, y_i\}$ is $(k-1)$-linked. {This contradicts the assumption that $D$ has no linking from $X$ to $Y$.  Hence, we may assume that}
\begin{equation}\label{4}
  { D \text{ does not contain any of the arcs } x_1y_1,\ldots{},x_ky_k}
\end{equation}

\begin{claim}\label{4.1}
 {$h\geq 3$}.
\end{claim}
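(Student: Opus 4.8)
Since every composition has $h\ge 2$ by definition, the claim amounts to ruling out $h=2$. The plan is to show that if $h=2$ then $D$ admits the required linking of $X$ to $Y$, contradicting our choice of $X,Y$. So suppose $D=H[S_1,S_2]$. As $D$ is $3k$-strong with $k\ge 2$, $D$ is strong, and since the semicomplete digraph $H$ has only two vertices it must contain both arcs between them; hence every arc between $S_1$ and $S_2$ is present in both directions. By (\ref{4}) no pair $(x_i,y_i)$ can have its two endpoints in different parts (that would force the arc $x_iy_i$), so each pair lies wholly inside $S_1$ or wholly inside $S_2$; write $p$ and $q=k-p$ for the numbers of pairs inside $S_1$ and inside $S_2$.

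The linking I would construct routes each $S_1$-pair as a $2$-path $a\to s\to b$ with $s\in S_2$, each $S_2$-pair as a $2$-path through a vertex of $S_1$, and copes with a possible shortage of such ``middle'' vertices in $S_2$ by re-routing at most one leftover $S_1$-pair inside $S_1$ itself. First, a one-line count from $|S_1|+|S_2|=|V(D)|\ge 3k+1$ shows that at least one of $|S_1|\ge 2p+q$ and $|S_2|\ge 2q+p$ holds; after possibly interchanging $S_1$ with $S_2$ (and $p$ with $q$) I may assume $|S_1|\ge 2p+q$. Then $|S_1\setminus(X\cup Y)|=|S_1|-2p\ge q$, so I can pick distinct $t_1,\dots,t_q\in S_1\setminus(X\cup Y)$ and route the $j$-th $S_2$-pair $(c_j,d_j)$ as $c_jt_jd_j$. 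Put $r=\max\{0,\,k+q-|S_2|\}$. The trivial bound $|S_2|\ge 2q$ gives $r\le p$, and the hypothesis $|S_2|=|V(D)\setminus S_1|\ge 2k-3$ gives $r\le 3-p$, so $r\le 1$; moreover $p-r\le |S_2|-2q=|S_2\setminus(X\cup Y)|$, so $p-r$ of the $S_1$-pairs can be routed through distinct vertices of $S_2\setminus(X\cup Y)$.

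It remains to link the leftover $S_1$-pair $(a,b)$ inside $S_1$ when $r=1$, avoiding the $2(p-1)+q$ vertices of $S_1$ already used. Since $ab\notin A(D)$ by (\ref{4}) and $D$ is $3k$-strong, every $(a,b)$-separator in $D$ has size at least $3k$; but such a separator must contain all of $S_2$ (because $a\to s\to b$ for every $s\in S_2$) together with an $(a,b)$-separator inside $S_1$, so there are at least $3k-|S_2|$ internally disjoint $(a,b)$-paths in $S_1$. Now $r=1$ forces $|S_2|=k+q-1$, whence $3k-|S_2|=2k-q+1>2(p-1)+q$, and an $(a,b)$-path in $S_1$ therefore survives the deletion of those $2(p-1)+q$ vertices. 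A routine check shows the three families of paths are pairwise disjoint --- they meet $S_1\cup S_2$ only in the prescribed, pairwise distinct vertices, and the interior of the leftover path lies in $S_1$ and avoids all of them --- so $D$ is $k$-linked, contradicting the choice of $X,Y$.

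I expect the last step to be the only delicate point, and it is precisely where the hypothesis $|V(D)\setminus S_i|\ge 2k-3$ is essential: it guarantees $r\le 1$, so that ordinary strong connectivity of $S_1$ after a bounded deletion suffices, whereas handling two or more leftover pairs would require $S_1$ to be $2$-linked --- which does not follow from high connectivity of a general digraph. This is also the obstruction underlying the sharpness recorded in Proposition~\ref{prop2}.
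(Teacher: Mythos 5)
Your proof is correct, but it takes a genuinely different route from the paper's. The paper disposes of the case $h=2$ by extracting a \emph{single} $(x_i,y_i)$-path of length $2$ through a vertex of the opposite part outside $X\cup Y$, deleting its three vertices, and invoking the induction hypothesis on $k$ (the whole proof of Theorem \ref{the1} is an induction on $k$, so $D\setminus V(P)$ is $(k-1)$-linked and the linking is completed immediately). You instead construct all $k$ paths at once: after observing, as the paper does, that (\ref{4}) together with strongness forces each pair into a single part and makes every arc between the parts present in both directions, you route each pair through a private middle vertex of the opposite part, use $|V(D)|\geq 3k+1$ and $|V(D)\setminus S_1|\geq 2k-3$ to show that at most one pair is left over ($r\leq p$ and $r\leq 3-p$, hence $r\leq 1$), and handle that pair by a local Menger argument inside $S_1$: since every $(a,b)$-separator of $D$ contains $S_2$, there are at least $3k-|S_2|=2k-q+1>2(p-1)+q$ internally disjoint $(a,b)$-paths in $S_1$, so one avoids all previously used vertices. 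I have checked the counts and the pairwise disjointness of the three families of paths, and the argument holds. What each approach buys: the paper's is much shorter because it delegates the work to the induction already in place; yours is self-contained, requires no induction hypothesis, and makes explicit exactly where the hypothesis $|V(D)\setminus S_i|\geq 2k-3$ enters (bounding the number of leftover pairs by one, which is precisely the obstruction behind Proposition \ref{prop2}). Either argument is acceptable in context.
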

\begin{proof}
 {Otherwise, $D=H[S_1,S_2]$. Since $D$ is strong,  {it follows from the definition of a composition that $D$ contains the  2-cycle $xyx$ for  every  vertex $x$ of $S_1$ and every vertex $y$ of $S_2$}.  {Using this we will find an $(x_i,y_i)$-path of length 2 for some $i\in[k]$:}  The inequality (\ref{4}) indicates that for all $i \in [k]$,  {the vertices $x_i$ and $y_i$ are either both located in $S_1$ or both in $S_2$}. Since $|D|\geq 3k$, either $S_1$ or $S_2$ must have a vertex $v \notin X \cup Y$.  {Assume first that $v\in S_2$}. If  {$x_i, y_i \in S_1$, for some $i\in [k]$,} then $x_i v y_i$ is an $(x_i, y_i)$-path in $D$. Otherwise, $ x_i, y_i \in S_2 $ for all $ i \in [k] $. Since $ h \geq 2 $, there must be a vertex $ u \in S_1 \setminus (X \cup Y) $ such that $ x_i u y_i $ is an $ (x_i, y_i) $-path for all $ i \in [k] $. In either case, we obtain an $(x_i, y_i)$-path $P$ of length 2 that uses at most two vertices from  {$V(S_1)$ and $V(S_2)$}, for some $i\in [k]$. Consequently, $D \setminus V(P)$ is a $3(k-1)$-strong semicomplete composition, and $|D \setminus (S_i \cup V(P))| \geq 2(k-1) - 3$ for all $i \in [2]$. Moreover, $\delta^+(D \setminus V(P)) \geq  {23k} - 3$. By the induction hypothesis, $D \setminus V(P)$ is $(k-1)$-linked. This contradicts our assumption that $ D $ is not $ k $-linked.}
\end{proof}

 {Let $D_0=H[S^0_1,\ldots,S^0_h] $ be the digraph obtained from $D$ by deleting all arcs which lies inside $S_i$ for all $i\in [h]$. By Lemma \ref{lem2.3}, $D_0$ is $3k$-strong. We now construct a new semicomplete digraph $D'$ with $V(D')=V(D_0)$ from $D_0$ as follows. For each part $S^0_i$ in $D_0$, let $Y'_i=Y\cap V(S^0_i)$. We then construct $S'_i$ by adding new arcs in $S^0_i$ such that both $ Y'_i $ and $V(S_i)\setminus Y'_i$ induce complete digraphs, and $ Y'_i $ dominates all other vertices in  $ V(S_i)\setminus Y'_i $.}

 Clearly,  {the digraph} $D' = H[S'_1, \ldots, S'_h]$ is a  {$3k$-strong} semicomplete digraph.  {Note that, for each $i\in [h]$, every vertex in $Y'_i$ has the out-degree at least $23k$, and every vertex in $V(S_i)\setminus Y'_i$ has out-degree at least $22k$ as we maybe loose arcs to $Y'_i$ in $S'_i$. Hence, we obtain that }$\delta^+(D') \geq  {22k}$. Also, by our construction, we get that
\begin{equation}\label{3}
 {\text{if $x_j, y_j \in V(S'_i)$, then there is no $(x_j, y_j)$-path in $(S'_i\setminus (X\cup Y))\cup \{x_j,y_j\}$ }}
\end{equation}


Theorem \ref{the0} implies that $D'$ is $k$-linked. Consequently, $D'$ contains $k$ disjoint minimal paths $P_1, \ldots, P_k$, where $P_i$ is an $(x_i, y_i)$-path. Observe that if there exists a pair of vertices $x_i, y_i$ lying in the same part $S'_j$, there is no $(x_i, y_i)$-path in $S'_j$ by (\ref{3}). This implies that all paths $P_1, \ldots, P_k$ only use the arcs between different parts,  {provided that each path $P_i$ is chosen to be a minimal path.} Thus, these $k$ paths do not use the newly added arcs in $D'$, so they are also present in $D$. This contradicts our assumption that such paths do not exist in $D$. The proof is complete.
 \hfill $\square$



 \section{Proof of Theorem \ref{fina}}

 The following lemma is an important tool in our proof. It was shown initially for $n\geq 11k$  in \cite{Pokrovskiy(2015)}, and further improvements have been made in \cite{Meng(2021)}.
\begin{lemma}\label{anchor}
\cite{Meng(2021)} Let $T$ be a tournament on $n$ vertices. If $n \geq 9k - 6$, then there are two disjoint vertex sets, each of size $k$, such that one short anchors the other.
\end{lemma}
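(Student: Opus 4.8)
The plan is to fix a large ``middle'' set of vertices and route every required path through it using paths of length $2$ or $3$, turning the all-permutations requirement into a greedy disjoint-routing argument. First I would partition $V(T)$ into three disjoint parts $X$, $Y$, $M$ with $|X| = |Y| = k$ and $|M| = n - 2k \geq 7k - 6$, choosing $X$ to consist of $k$ vertices of large out-degree and $Y$ of $k$ vertices of large in-degree. Then (in the language of Definition~\ref{def3}) each $x \in X$ is a $\gamma$-out-dominator of $M$ and each $y \in Y$ is a $\gamma$-in-dominator of $M$ for a suitably large $\gamma$; quantitatively, writing $A_x = N^+_T(x) \cap M$ and $C_y = N^-_T(y) \cap M$, the degree choice gives $|A_x|, |C_y| \geq \tfrac{n-1}{2} - 2k + 1$.

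Second, I would show that for every pair $(x,y) \in X \times Y$ there are many short $(x,y)$-paths whose interior lies in $M$. There are three cases: a direct arc $xy$; a $2$-path $xay$ with $a \in A_x \cap C_y$ whenever these neighbourhoods meet; and, when $A_x$ and $C_y$ are disjoint, a $3$-path $xacy$ where $a \in A_x$, $c \in C_y$ and $a \to c$. The number of such $3$-paths equals the number of forward arcs from $A_x$ to $C_y$, and the point is that it is large unless almost all arcs between the two neighbourhoods are oriented backwards.

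Third, I would fix an arbitrary permutation $\pi$ of $[k]$ and construct the disjoint paths one pair at a time. When routing the pair $(x_i, y_{\pi(i)})$, at most $2(k-1)$ interior vertices of $M$ have been used by earlier paths, so the bound $|M| \geq 7k - 6$ leaves, by the counting of the previous step, an unused short $(x_i, y_{\pi(i)})$-path; iterating over $i$ yields $k$ vertex-disjoint paths of length at most $3$. Since $\pi$ was arbitrary, this shows that $X$ short anchors $Y$.

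The main obstacle is the disjoint case of the second step: ruling out the configuration in which $A_x$ and $C_y$ are disjoint and every arc between them is oriented from $C_y$ to $A_x$, for in that case $x$ has no short path to $y$ at all and no routing can succeed. Degree-based selection alone does not exclude this, since regular tournaments force $|A_x|, |C_y| \approx |M|/2$ for every pair; the real work is therefore to select $X$ and $Y$ extremally (for instance from a median order of $T$, or via a counting argument bounding the number of ``bad'' ordered pairs) so that no pair exhibits the adverse orientation and, moreover, so that enough forward arcs survive the deletion of the $2(k-1)$ previously used middle vertices. Pinning down exactly how much slack this requires is precisely what produces the sharp hypothesis $n \geq 9k - 6$.
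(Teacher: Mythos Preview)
The paper does not prove this lemma: it is quoted as an external result from \cite{Meng(2021)} (the surrounding text says it ``was shown initially for $n\geq 11k$ in \cite{Pokrovskiy(2015)}, and further improvements have been made in \cite{Meng(2021)}''), and no argument is given. There is therefore no in-paper proof to compare your proposal against.

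As to the proposal on its own merits, it is an outline with the key step left open. You correctly identify the crux: for a pair $(x,y)$ with $A_x\cap C_y=\emptyset$ one needs enough arcs oriented from $A_x$ to $C_y$, and a purely degree-based choice of $X$ and $Y$ does not guarantee this. You then defer that step to an unspecified ``extremal'' selection (median order or a counting argument) and simply assert that this is what produces the bound $9k-6$. In the published proofs the median order is precisely the mechanism that does the work --- taking $X$ from one end and $Y$ from the other forces a supply of forward arcs between the relevant neighbourhoods, and the constant comes from a careful count along that order. Your framework is compatible with that route, but as written it names the obstacle rather than overcoming it.
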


 {For a given $81k^2(l+2)^2$-strong $l$-quasi-transitive digraph $D$ and disjoint vertex sets $X$ and $Y$ of size $k$ we prove the result by constructing a semicomplete digraph $D'$. Then we use Lemma \ref{anchor} and Lemma \ref{lemma8} to find a linking from $X$ to $Y$ in $D'$ such that these paths  can be transformed back to the desired linking in $D$.} To construct a  {suitable}  semicomplete digraph from an $l$-quasi-transitive digraph, we need the following lemma. Here, the \textbf{distance} $d(x, y)$ from a vertex $x$ to a vertex $y$ is the length of a shortest $(x, y)$-path.
\begin{lemma}\label{key}
\cite{C(2012)} Let $l$ be an integer with $l \geq 2$. Suppose $D$ is an $l$-quasi-transitive digraph and contains a $(u, v)$-path. If $d(u, v) \geq l$, then $d(v, u) \leq l + 1$.
\end{lemma}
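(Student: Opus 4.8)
The plan is to fix a shortest $(u,v)$-path $P=u_0u_1\cdots u_m$ with $u_0=u$, $u_m=v$ and $m=d(u,v)\ge l$, and to read off enough forced arcs among the $u_i$ to build a short $(v,u)$-path explicitly; the construction will itself witness that $d(v,u)$ is finite. The two structural facts I would establish first are: (i) \emph{no forward chords}: if $u_au_b\in A(D)$ with $a<b$ and $b-a\ge 2$, then $u_0\cdots u_au_b\cdots u_m$ is a $(u,v)$-path of length $m-(b-a-1)<m$, contradicting minimality of $P$; hence every arc of $D$ joining two non-consecutive vertices of $P$ is directed from the larger index to the smaller one; and (ii) \emph{span-$l$ back arcs}: for each $i$ with $0\le i\le m-l$ the subpath $u_i\cdots u_{i+l}$ has length exactly $l$, so $l$-quasi-transitivity forces an arc between $u_i$ and $u_{i+l}$, which by (i) must be $u_{i+l}\to u_i$.

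Next I would reduce the whole lemma to a single arc-existence statement. Suppose we can locate one forced back arc $u_m\to u_r$ with $r\in[0,l]$. If $r=0$ this already gives $d(v,u)=1$. If $1\le r\le l$, then following the forward arcs $u_r\to u_{r+1}\to\cdots\to u_l$ and then the span-$l$ back arc $u_l\to u_0$ yields a $(v,u)$-path $u_m u_r u_{r+1}\cdots u_l u_0$ of length $1+(l-r)+1=l-r+2\le l+1$ (its interior vertices are distinct and lie on $P$, since their indices are at most $l<m$). Thus it suffices to produce a forced back arc out of $u_m$ whose span $s=m-r$ lies in the window $\{m-l,\dots,m\}$ of $l+1$ consecutive values. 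For the base range $l\le m\le 2l$ this is immediate: the span-$l$ back arc $u_m\to u_{m-l}$ already lands at $r=m-l\in[0,l]$.

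For larger $m$ I would manufacture back arcs of prescribed span by applying $l$-quasi-transitivity to carefully assembled length-$l$ paths among the $u_i$ built from span-$l$ back arcs and forward arcs. A length-$l$ path using $b$ span-$l$ back arcs and $l-b$ forward arcs has net index drop $bl-(l-b)=b(l+1)-l$, so for $b\ge 2$ it forces, by (i), a back arc of span $b(l+1)-l=(b-1)(l+1)+1\equiv 1\pmod{l+1}$; this yields all spans $l+2,2l+3,\dots,l^2$ directly, and one then iterates, using these longer back arcs as building blocks inside new length-$l$ paths to reach spans $\equiv 1\pmod{l+1}$ of any required size. Since any window of $l+1$ consecutive integers contains exactly one integer congruent to $1$ modulo $l+1$, the target window $\{m-l,\dots,m\}$ pinpoints the unique span $s^\ast$ I must realize, and the plan is to show $s^\ast$ is achievable so that $u_m\to u_{m-s^\ast}$ lands in $[0,l]$.

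The main obstacle, and the part needing the most care, is precisely this \emph{span control}: realizing the admissible span $s^\ast\equiv 1\pmod{l+1}$ for \emph{every} value of $m$, while ensuring each witnessing length-$l$ path is a genuine (simple) path whose vertices stay inside $\{u_0,\dots,u_m\}$. I would handle it by a case analysis on $m\bmod(l+1)$ together with an induction on the span (writing $s=j(l+1)+1$ and reducing $j$), choosing at each stage how many back arcs of each already-available span and how many forward arcs to concatenate so that the partial index sums never leave $[0,m]$ and never repeat a vertex; the forward arcs supply the slack needed to tune the residue, and the previously forced longer back arcs supply the reach needed when $m$ is large. Verifying realizability in the awkward residues (the analogue of the ``$m$ odd'' case for $l=2$, where the honest bound is the full $l+1$ rather than something smaller) is where the bookkeeping is heaviest, but no idea beyond (i), (ii) and iterated $l$-quasi-transitivity is required.
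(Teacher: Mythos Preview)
The paper does not prove this lemma; it is quoted from Hern\'andez-Cruz and Galeana-S\'anchez and used as a black box, so there is no in-paper proof to compare against. I will therefore assess your argument on its own.

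Your setup is exactly right: fix a shortest $(u,v)$-path $P=u_0\cdots u_m$, observe (i) that $P$ has no forward chords and (ii) that the span-$l$ back arcs $u_{i+l}\to u_i$ are forced, and reduce the lemma to producing a single back arc $u_m\to u_r$ with $r\in[0,l]$ (after which $u_m\,u_r\,u_{r+1}\cdots u_l\,u_0$ has length at most $l+1$). The base range $l\le m\le 2l$ is handled correctly.

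Where you run into trouble is the ``span control'' programme. You propose to manufacture, for every $m$, a back arc from $u_m$ of span $s^\ast\equiv 1\pmod{l+1}$ in the window $[m-l,m]$, by iterating length-$l$ paths built from span-$l$ back arcs and forward arcs. But if $a$ of the back arcs in such a path have span $l$ and $b$ of them have span $\equiv 1\pmod{l+1}$, the net drop is $\equiv 2b+1\pmod{l+1}$; so as soon as you feed the newly created arcs back in ($b\ge 1$) you leave the residue class $1$, and for even $l$ you cannot return to it without a further layer of iteration. The bookkeeping you allude to is therefore genuinely delicate, and your sketch does not establish that the target span is always reachable while keeping the witnessing path simple and inside $\{u_0,\dots,u_m\}$. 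This is the gap.

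It is, however, easily closed by replacing the span arithmetic with a direct strong induction on $j$: \emph{for every $j\in[l,m]$ there exists $r_j\in[0,l]$ with $u_j\to u_{r_j}$}. For $l\le j\le 2l$ take $r_j=j-l$. For $j>2l$, the inductive hypothesis at $j-2$ gives $u_{j-2}\to u_{r_{j-2}}$ with $r_{j-2}\in[0,l]$, and then
\[
u_j\;\to\;u_{j-l}\;\to\;u_{j-l+1}\;\to\;\cdots\;\to\;u_{j-2}\;\to\;u_{r_{j-2}}
\]
is a length-$l$ path (one span-$l$ back arc, $l-2$ forward arcs, one inductive back arc), simple because $r_{j-2}\le l<j-l$; hence $l$-quasi-transitivity together with (i) forces $u_j\to u_{r_{j-2}}$, and we may take $r_j=r_{j-2}$. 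Setting $j=m$ finishes the proof. This three-line induction does exactly what your span-control section was meant to do, without any residue analysis.
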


\begin{proof}\textbf{of Theorem \ref{fina}.}
Let $D$ be a $81k^2(l+2)^2$-strong $l$-quasi-transitive digraph, and let $X=\{x_1,\ldots,x_k\}$ and $Y=\{y_1,\ldots,y_k\}$ be two  {disjoint} subsets of $V(D)$. Our objective is to construct pairwise disjoint $(x_i, y_i)$-paths for $i \in [k]$. We define $D_0 = D \setminus (X \cup Y)$. Note that the class of $ l $-quasi-transitive digraphs is closed under induced subdigraphs. 

We construct an auxiliary semicomplete digraph $D'$ with vertex set $V(D)$ as follows.  {Let $f(k,l)=\binom{9k-6}{2}(l+2) + (2l+5)k+9k$.} For any two non-adjacent vertices $u$ and $v$ of  {$V(D_0)$}, we add the arc $uv$ to $D'$ if and only if there are at least $ {f(k,l)}$ independent $(u, v)$-paths each of length at most $l+1$ in $D$. We refer to arcs added in this manner as \textbf{new arcs}.  {The new arcs} will be used to ensure the semicompleteness of $D'$. Additionally, we add the arc set $A = \{ux_i, i\in [k] \mid x_i \in X, u \in V(D) \setminus \{x_i\}, \text{ and } ux_i \notin A(D)\} \cup \{y_iv, i\in [k] \mid y_i \in Y, v \in V(D) \setminus \{y_i\}, \text{ and } y_iv \notin A(D)\} $ to $D'$ (see Fig. \ref{fig7}). Note  {that no linking from $X$ to $Y$ will use   any arc from  $A$.}

 \begin{figure}[htbp]
	\centering
	\begin{minipage}{0.8\linewidth}
		\centering
		\includegraphics[width=0.8\linewidth]{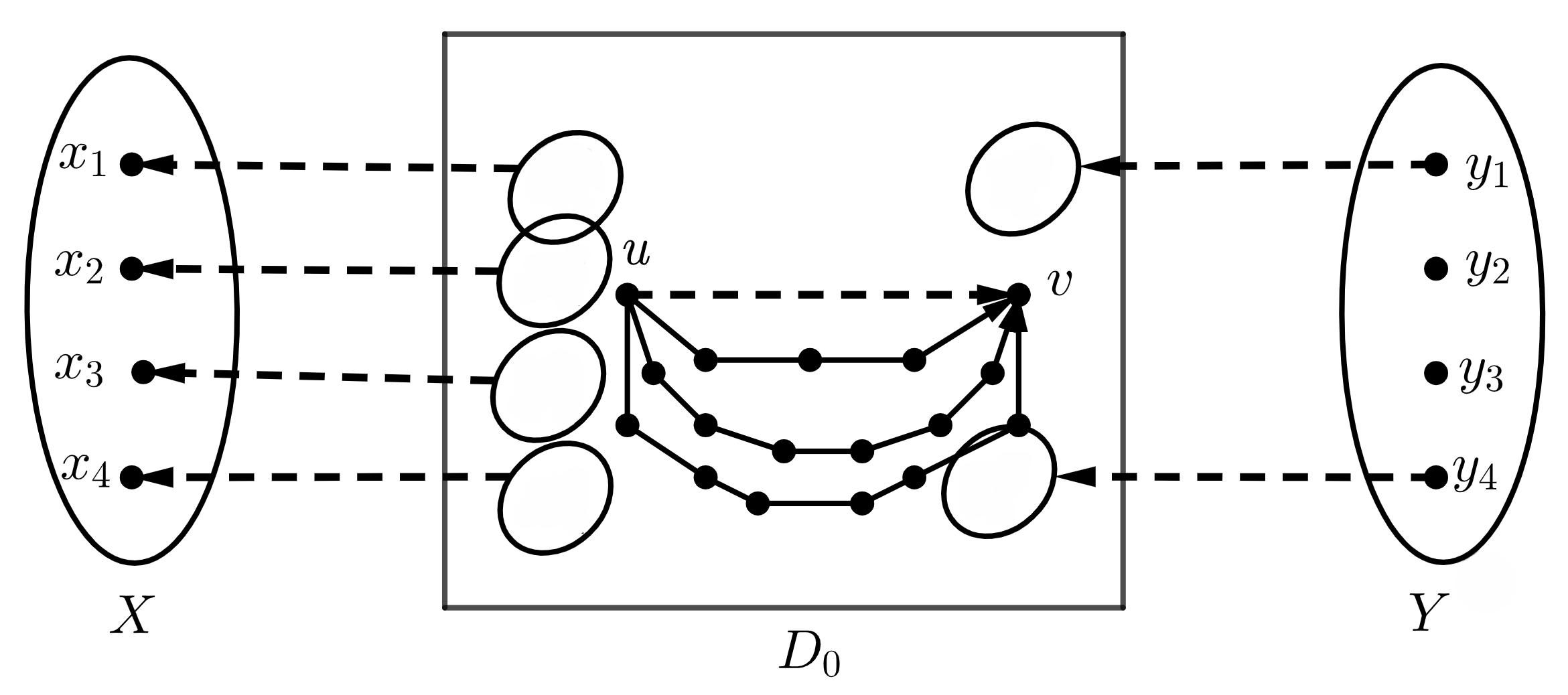}
		\caption{The construction of $D'$: The dashed lines indicate that these arcs are absent in $D$. We add arcs in $D'$ in the direction indicated by the arrows. Specifically, an arc $uv$ is added to $D'_0$ if and only if there exist $ {f(k,l)}$ independent  $(u,v)$-paths in $D_0$.}
		\label{fig7}
	\end{minipage}
\end{figure}

To prove that $D'$ is a semicomplete digraph, we only need to examine non-adjacent vertices within $V(D_0)$. By Lemma \ref{key} and the strong connectivity of $D_0$, for any two non-adjacent vertices $u$ and $v$ in $D_0$, there exists a $(u,v)$-path or a $(v,u)$-path of length at most $l + 1$. We proceed as follows: Suppose we have $ i $ independent paths $ Q_1, Q_2, \ldots, Q_i $ in $ D_0 $, each either from $ u $ to $ v $ or from $ v $ to $ u $  {and all of} length of at most $ l + 1 $. If the remaining digraph $(D_0 \setminus \bigcup_{j \in [i]} V(Q_j)) \cup \{u, v\}$  {is} strong, we can obtain another $(u,v)$-path or $(v,u)$-path $Q_{i+1}$ of length at most $l+1$ in $(D_0 \setminus \bigcup_{j \in [i]} V(Q_j)) \cup \{u, v\}$ by applying Lemma \ref{key} again.  {Since a path of length at most $l+1$ has at most $l+2$ vertices {and
\begin{align*}
&\kappa(D)-2k - 2f(k,l)(l + 2)\\
\geq & 81k^{2}(l+2)^{2}-2k-2\left(\frac{81k^2-117k+42}{2}(l+2)+ (2l+5)k+9k\right)(l+2)\\
= & 81k^2(l^2+4l+4)-2k-\left((81k^{2}-117k+42)(l+2)+4lk+28k\right)(l + 2)\\
=& 81k^2l^2+324k^2l+324k^2-2k-(81k^2l-113kl+42l+162k^2-206k+84)(l+2)\\
=&113kl^2+432kl+410k-42l^2-168l-168>0\  (\text{as }k\geq 1, l\geq 2).
\end{align*}}}
we can recursively derive  {$2f(k,l)$} independent paths of length at most $l+1$ in $D_0$,  {where each path is either directed from $u$ to $v$ or from $v$ to $u$.} By the pigeonhole principle, at least ${f(k,l)}$ of these paths between $u$ and $v$ are in the same direction. This means that $ D' $ is a semicomplete digraph by the construction of $ D' $.  {Furthermore, for each pair of non-adjacent vertices $u,v$ of $D_0$ such that $ D_0 $ contains at least $ {f(k,l)} $ independent $ (u,v) $-paths of lengths at most $ l+1 $ we refer to  {the collection of} independent $ (u, v) $-paths with a length of at most $ l+1 $ as the \textbf{available paths} for the arc $ uv \in A(D') $}.

Let $D_0' = D' \setminus (X \cup Y)$. For all $i \in [9k - 6]$, {set $D_i' = D_{i-1}' \setminus \{u_i\}$, and we} choose $u_i$ to be a nearly in-dominating  vertex in $V(D_{i-1}')$ using Lemma \ref{lemma8}. Let $U = \{u_i \mid i \in [9k - 6]\}$. Let $T_U'$ be a spanning tournament of $D' \langle U \rangle$.  {As $|U| = 9k - 6$, we can apply Lemma \ref{anchor} to the tournament $T_U'$ and we obtain two disjoint} sets: $U_1 = \{u_{\alpha_1}, u_{\alpha_2}, \ldots, u_{\alpha_k}\}$ and $U_2 = \{u_{\beta_1}, u_{\beta_2}, \ldots, u_{\beta_k}\}$ such that $U_i\subseteq U, \ i\in [2]$ and  $U_1$ anchors $U_2$ in $T_U'$.

To link $X$ to $U_1$ in $D$, we consider each $i \in [k]$. Since
\[
d_{D \setminus (U \cup X \cup Y)}^{+}(x_i) \geq d_D^{+}(x_i) - |X \cup Y \cup U| \geq \kappa (D) - 11k + 6 > 23k,
\]
we can greedily find $X^+ = \{x_1^{+}, x_2^{+}, \ldots, x_k^{+}\}$ such that $x_i \rightarrow x_i^{+}$ in $D$ and $x_i^+$ is $11k$-good for $u_{\alpha_i}$ in $D_0'$, for all $i \in [k]$. This is possible since $u_{\alpha_i} \in U$ is a nearly in-dominating  vertex  {(recall Definition \ref{def2} with $c=11k$)}. Furthermore, if $x_i^+\rightarrow u_{\alpha_i}$ in $D'$, we let $P_i = x_i x_i^+ u_{\alpha_i}$. If not, we observe that the number of independent $(x_i^+, u_{\alpha_i})$-paths of length 2 that are internally disjoint from 
 {$X^+\cup U$} is at least $11k - |X^+ \cup U| \geq k$ in $D'$. This implies that $D'$ contains $k$ disjoint paths $P_1', \ldots, P_k'$, each of length at most 3 and linking $x_i$ to $u_{\alpha_i}$.

For each path $P_i' = x_i x_i^+ x_i^{++} u_{\alpha_i}$, one or both of the arcs $x_i^+ x_i^{++}$ and $x_i^{++} u_{\alpha_i}$ may be new arcs. Without loss of generality, we assume they are all new arcs. By the construction of the new arcs, there are at least {$f(k,l)$} available paths in $D_0$ for each new arc $x_i^+ x_i^{++}$ or $x_i^{++} u_{\alpha_i}$.  {Furthermore}, the vertices in $\bigcup_{i=1}^k {\text{Int}(P_i')} \cup U$ may intersect at most $11k-6$ of these paths. Thus,  {by selecting appropriate disjoint available paths in $D_0$ to replace the new arcs in each $P_i'$ we can obtain
disjoint paths $P_1'',\ldots{},P''_k$ such that $P''_i$ and an ($x_i,u_{\alpha_i})$-path for  $i \in [k]$}. This leads to $ {|V(P''_i)|}\leq 2(l+1)+3=2l+5$, and then
\[
\left| \bigcup_{i=1}^k V(P_{i}'') \right| \leq (2l + 5)k.
\]

Next, we proceed to find $k$ disjoint paths from $U_2$ to $Y$ in $D$. The spanning tournament $T_U'$ of $D' \langle U \rangle$ has $\binom{9k - 6}{2}$ arcs. By the construction of $D'$, for any two non-adjacent vertices $u, v$ in $D \langle U \rangle$, there are at least $f(k,l)-(2l + 5)k={\binom{9k-6}{2}(l+2)+9k}$ available paths in $D_0$ for $u, v$ that avoid the set $\bigcup_{i=1}^k  {V(P_i'')}$. Clearly, replacing a new arc in $D' \langle U \rangle$ with an available path that avoids $\bigcup_{i=1}^k  {V(P_i'')}$ ensures that the available paths corresponding to different new arcs remain independent {and all available paths are internally disjoint from $U$}. Define $U^*$ as the union of all vertices from  {the available paths that were used in this replacement}.  Thus, $U^* \cup U$ contains at most $\binom{9k - 6}{2}(l + 2)$ vertices, and $U^*\cap \bigcup_{i=1}^k V(P_i'')=\emptyset$. Let $B = \left(\bigcup_{i=1}^k  {V(P_i'')}\right) \cup U^* \cup U$, and observe that
\begin{align*}
|B|&\leq (2l + 5)k+\binom{9k - 6}{2}(l + 2)\\&=2lk+5k+\frac{(81k^2-117k+ 42)(l + 2)}{2}\\
&=\frac{81k^2l - 113kl}{2}+81k^2-112k+21l+42 <81k^2(l+2)^2 \leq \kappa (D).
\end{align*}
Further, $\kappa (D)-|B|+k\geq k$.
{Hence} the subdigraph $(D \backslash B) \cup U_2$ is $k$-strong. Thus,  {by Theorem \ref{menger},} there exists a permutation $\pi$ of $\{1, \ldots, k\}$ such that there are $k$ disjoint paths $R_1^\pi, \ldots, R_k^\pi$ in $(D \backslash B) \cup U_2$ such that $R_i^\pi $ is a $(u_{\beta_{\pi(i)}},y_i)$-path.

Finally, we link $U_1$ to $U_2$ in $D$. For each $i \in [k]$, we find paths $Q_i^\pi$ to connect $P_i''$ with $R_i^\pi$, forming the desired path $P_i$ from $x_i$ to $y_i$. Recall that $U_1$  {short} anchors $U_2$ in $T_U'$. Thus, there exist disjoint $(u_{\alpha_i}, u_{\beta_{\pi(i)}})$-paths $Q_i'$, $i \in [k]$, of lengths at most 3 in $D' \langle U \rangle$. By substituting  {each} new arc in $D' \langle U \rangle$ with an appropriate available path in $D \langle U^* \rangle$, we obtain $k$ disjoint paths $Q_i^\pi$ in $D \langle U \cup U^* \rangle$, where each $Q_i^\pi$ is a $(u_{\alpha_i}, u_{\beta_{\pi(i)}})$-path. Finally, let
\[
P_i = P_i''[x_i, u_{\alpha_i}] \cup Q_i^\pi[u_{\alpha_i}, u_{\beta_{\pi(i)}}] \cup R_i^\pi[u_{\beta_{\pi(i)}}, y_i] \quad \text{for} \quad i \in [k].
\]
Thus, $P_1, P_2, \ldots, P_k$ are the desired paths. This completes our proof of the theorem.
\end{proof}

\section{Remarks}

As mentioned in the introduction, Thomassen proved the following.

\begin{proposition}\label{prop1}
\cite{Thomassen(1991)} For each $l \in \mathbb{N}$, there exists an $l$-strong digraph $D^*$ that is not 2-linked.
\end{proposition}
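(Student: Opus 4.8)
The plan is to prove the proposition by an explicit construction, as Thomassen did: for each $l$ I would build an $l$-strong digraph $D^{*}$ together with four distinguished vertices $x_1,x_2,y_1,y_2$ and show directly that $D^{*}$ has no pair of disjoint paths, one from $x_1$ to $y_1$ and one from $x_2$ to $y_2$. The design principle is to mimic, inside a highly connected digraph, the classical planar obstruction to $2$-linkage: if the four terminals occur in the cyclic order $x_1,x_2,y_1,y_2$ on the boundary of a disk, then any arc from $x_1$ to $y_1$ inside the disk separates $x_2$ from $y_2$ and forces every $(x_2,y_2)$-arc to cross it. So the whole point is to make the digraph "behave like a disk" for path-routing while still being $l$-strong. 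There are then three things to verify: $D^{*}$ is $l$-strong; $D^{*}$ contains some $(x_1,y_1)$-path and some $(x_2,y_2)$-path separately, so the obstruction is genuine and not a triviality; and no two such paths are disjoint.

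For the construction I would take a long cyclic sequence of pairwise disjoint vertex classes $C_0,C_1,\dots,C_{m-1}$, each of size $l$, joined by all arcs from $C_i$ to $C_{i+1}$ (indices mod $m$), and then attach the terminals so that $x_1$ can only enter this ``strip'' at $C_0$, $y_1$ can only be reached from $C_{m-1}$, $x_2$ can only enter at some $C_p$, and $y_2$ can only be reached from some $C_q$, with $0<p<q<m-1$; finally I would superimpose a carefully chosen further set of arcs whose sole purpose is to restore full $l$-strong connectivity. The subtlety — really the heart of Thomassen's construction — is that these extra arcs must be chosen so that no path can use them to ``skip over'' a class: a naive blow-up of a directed cycle does \emph{not} work, because the $l$ copies inside each class give two competing paths enough room to slip past each other (for such a blow-up the terminals above are in fact $2$-linked). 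I would treat the final arc set as fixed and given.

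Checking that $D^{*}$ is $l$-strong is the routine step. I would invoke Menger's Theorem: between any ordered pair of vertices one exhibits $l$ internally disjoint paths, using the complete forward joins $C_i\to C_{i+1}$ as $l$ parallel strands and the extra arcs as closing (return) routes; equivalently, deleting any $l-1$ vertices leaves at least one vertex in every class and hence a spanning strongly connected subdigraph. That a single $(x_1,y_1)$-path and a single $(x_2,y_2)$-path exist is then immediate (and in fact follows from $l$-strong connectivity as soon as $l\ge 3$).

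The hard part is the non-linkage, and this is the step I expect to be the main obstacle. Assume disjoint paths $P_1$ from $x_1$ to $y_1$ and $P_2$ from $x_2$ to $y_2$. From the way the terminals are attached and from the ``no skipping'' property of the arc set, I would first argue that $P_1$ must meet every class $C_0,\dots,C_{m-1}$ and $P_2$ must meet every class $C_p,\dots,C_q$, each path being monotone in the class index. The crucial claim is then a crossing lemma: two such monotone sweeps whose class-intervals are nested in the interleaved pattern forced by the cyclic order $x_1,x_2,y_1,y_2$ of the terminals cannot be vertex-disjoint — informally, $P_1$ is forced to enter the sub-strip $C_p,\dots,C_q$ on the ``$y_1$ side'' of $x_2$ and to leave it on the ``$x_2$ side'' of $y_2$, so somewhere in between it is pinned to a vertex that $P_2$ also uses. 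Proving this rigorously is exactly where the rigidity of the construction is spent: one must rule out every way in which the added connectivity arcs could let $P_1$ dodge $P_2$. Once the crossing lemma is established the disjointness of $P_1$ and $P_2$ is contradicted, and since $l$ was arbitrary the proposition follows.
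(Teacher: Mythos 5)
The paper offers no proof of this proposition at all: it is stated purely as a citation to Thomassen's 1991 paper \emph{Highly connected non-2-linked digraphs}, so the only meaningful question is whether your blind attempt constitutes a self-contained proof. It does not. The two components that carry all the mathematical content are both left unspecified. First, you write that you would ``superimpose a carefully chosen further set of arcs whose sole purpose is to restore full $l$-strong connectivity'' and then ``treat the final arc set as fixed and given.'' But the existence of such an arc set --- one that makes the strip $l$-strong while preserving the ``no skipping'' property --- is exactly the assertion being proved; you even observe yourself that the natural candidate (a blown-up directed cycle) fails because the $l$ parallel copies in each class let the two paths slide past one another. Having ruled out the obvious construction, you cannot simply posit that a better one exists: that is assuming the conclusion. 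Second, the non-linkage argument rests on a ``crossing lemma'' for monotone sweeps through nested class-intervals, which you state informally and explicitly defer (``Proving this rigorously is exactly where the rigidity of the construction is spent''). Since the crossing lemma can only be proved relative to a concrete arc set, and no arc set is given, neither half of the argument can be checked.

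A secondary issue is that your verification of $l$-strong connectivity (``deleting any $l-1$ vertices leaves at least one vertex in every class and hence a spanning strongly connected subdigraph'') silently uses the very return arcs you declined to specify; without them the strip $C_0 \to C_1 \to \cdots \to C_{m-1}$ is not even strongly connected. In short, what you have written is a plausible \emph{template} for Thomassen's construction together with an accurate diagnosis of where the difficulty lies, but the difficulty itself is not resolved. To close the gap you would need to either reproduce Thomassen's explicit construction (or another explicit one) and carry out the crossing argument against it, or else fall back on the citation, as the paper does.
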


To illustrate that the restriction placed on $S_i$ in Theorem \ref{the1} is necessary, we introduce the following proposition.

\begin{proposition}\label{prop2}
Let $l$ be an integer. There exists an $l$-strong semicomplete composition $D = R[S_1, \ldots, S_r]$ that is not $k$-linked, where $S_1, \ldots, S_r$ are disjoint digraphs satisfying $|D \setminus S_r| \leq 2k - 4$.
\end{proposition}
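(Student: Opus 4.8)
The plan is to build a counterexample by embedding Thomassen's $l$-strong digraph $D^*$ that is not $2$-linked (Proposition \ref{prop1}) inside one part of a semicomplete composition, so that the failure of $2$-linkedness inside that part forces a failure of $k$-linkedness in the whole composition, while the size restriction $|D\setminus S_r|\le 2k-4$ is respected. First I would take $R$ to be a transitive tournament (or any convenient strong semicomplete digraph) on a small number $r$ of vertices; the exact choice of $R$ only matters for guaranteeing that the composition is $l$-strong, so I would actually make $R$ strong enough (e.g. blow up so that $D_0=R[S_1^0,\ldots,S_r^0]$ is $l$-strong and invoke Lemma \ref{lem2.3}). Let $S_r$ be (a strengthened version of) $D^*$: since $D^*$ is only required to be $l$-strong and not $2$-linked, I can pad it with a dominating/ dominated clique structure if needed so that the composition $D=R[S_1,\ldots,S_r]$ is globally $l$-strong — the key point is that $S_r$ retains a pair of terminal requests $x_1,y_1,x_2,y_2$ inside $S_r$ that cannot be $2$-linked \emph{within} $S_r$.

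The main construction step is to choose the $2k$ terminals as follows: put $x_1,y_1,x_2,y_2$ inside $S_r$ witnessing the non-$2$-linkedness of $D^*$, and put the remaining $2k-4$ terminals $x_3,y_3,\ldots,x_k,y_k$ so that they "use up" all of $V(D)\setminus S_r$. Concretely, I would arrange $|V(D)\setminus S_r|=2k-4$ exactly (this is where the hypothesis $|D\setminus S_r|\le 2k-4$ is exploited — we make it an equality) and let $\{x_3,\ldots,x_k,y_3,\ldots,y_k\}=V(D)\setminus S_r$. Then any system of $k$ disjoint paths $P_1,\ldots,P_k$ linking $x_i$ to $y_i$ must have $P_3,\ldots,P_k$ occupy every vertex outside $S_r$ (each such $P_j$ already contains its two endpoints, which are distinct vertices outside $S_r$, and there are exactly $2(k-2)$ such vertices), so $P_3,\ldots,P_k$ are all single-arc or trivial paths lying entirely outside $S_r$, and $P_1,P_2$ are forced to live entirely inside $S_r$. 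But $x_1,y_1,x_2,y_2\in V(S_r)$ and $S_r$ (being $D^*$) has no two disjoint paths linking $x_1\to y_1$ and $x_2\to y_2$ — contradiction. Hence $D$ is not $k$-linked.

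I should double-check two routine points. The first is the connectivity bookkeeping: I must ensure $D$ is genuinely $l$-strong, which I handle by Lemma \ref{lem2.3} — it suffices that the "skeleton" $D_0$ (delete arcs inside parts) is $l$-strong, and this can be arranged by making $R$ sufficiently richly connected and the parts nonempty; alternatively one can take $R$ itself to be $l$-strong semicomplete. The second is realizing the endpoint count exactly: I need $|V(D)\setminus S_r|=2k-4$ and these $2k-4$ vertices to serve as $x_3,y_3,\ldots,x_k,y_k$; this just requires $r-1$ of the parts together having $2k-4$ vertices, which is immediate by choosing, say, $S_1,\ldots,S_{r-1}$ to be single vertices with $r-1=2k-4$, i.e. $r=2k-3$. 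The one genuine obstacle — and the step I would spend the most care on — is making sure that forcing $P_3,\ldots,P_k$ to use exactly the vertices outside $S_r$ really does trap $P_1$ and $P_2$ inside $S_r$: I must check that no arc of $D$ lets $P_1$ or $P_2$ "leave and re-enter" $S_r$ without passing through an already-occupied vertex. This is where the composition structure helps: every vertex outside $S_r$ is used by some $P_j$ ($j\ge3$) as an endpoint, so $P_1,P_2$ cannot route through any of them, and since all arcs between $S_r$ and the rest of $D$ go through those vertices, $P_1,P_2\subseteq V(S_r)$ as claimed. Writing this last implication carefully (in particular that $P_3,\dots,P_k$ being vertex-disjoint with exactly $2(k-2)$ endpoints among exactly $2k-4$ available vertices forces them to be \emph{arcs}, hence to occupy no further vertices) completes the argument.
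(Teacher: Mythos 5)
Your construction and the non-linkedness argument are essentially the paper's: take $r=2k-3$, let $S_1,\ldots,S_{r-1}$ be singletons, let $S_r=D^*$ from Proposition \ref{prop1}, use all $2k-4$ vertices outside $S_r$ as terminals, and place the two remaining terminal pairs inside $S_r$ at a quadruple witnessing that $D^*$ is not $2$-linked. Your trapping argument is also the paper's and is correct in its final form: every vertex outside $S_r$ is an endpoint of some $P_j$ with $j\ge 3$, hence unavailable to $P_1,P_2$, so $P_1,P_2$ are confined to $S_r$, contradiction. (Your intermediate claim that $P_3,\ldots,P_k$ are single arcs lying entirely outside $S_r$ is neither needed nor true in general -- such a path may detour through $S_r$, and whether $x_jy_j$ is an arc depends on the orientation of $R$ -- but this does not affect the conclusion.)

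The genuine gap is in your verification that $D$ is $l$-strong, and both mechanisms you propose fail in the only interesting case $l>2k-4$. First, the skeleton $D_0=R[S_1^0,\ldots,S_r^0]$ can never be made $l$-strong: deleting the at most $2k-4$ vertices of $V(D)\setminus S_r$ leaves the arcless digraph $S_r^0$ on $|D^*|\ge l+1\ge 2$ vertices, so $\kappa(D_0)\le 2k-4<l$, and the route through Lemma \ref{lem2.3} is a dead end. Second, $R$ has only $2k-3$ vertices, so $R$ itself cannot be $l$-strong either. Third, "padding" $D^*$ with extra dominating structure is dangerous, since it could create exactly the $2$-linkage whose absence you need; it would require a separate argument and is in any case unnecessary. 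The connectivity has to come from inside $S_r$: the paper chooses $R$ so that $v_r$ lies on a $2$-cycle with every $v_i$, $i<r$, whence every vertex of $S_r$ is on a $2$-cycle with every vertex of $V(D)\setminus S_r$. Then for any set $Z$ with $|Z|\le l-1$, the digraph $S_r\setminus Z$ is nonempty and strong (because $D^*$ is $l$-strong), and every surviving vertex outside $S_r$ is on a $2$-cycle with a vertex of $S_r\setminus Z$, so $D\setminus Z$ is strong. This direct argument is what your write-up is missing.
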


\begin{proof}
Let $r = 2k - 3$ and let $R$ be a tournament with vertex set $\{v_i \mid i \in [r]\}$ such that $R\langle \{v_i \mid i \in [r-1]\}\rangle$ is a transitive tournament with arc set $\{v_i v_j \mid i < j\}$, and $\{v_i v_r, v_r v_i \mid i < r\}$. For each $i \in [r-1]$, let $S_i$ be a single vertex, and let $S_r$ be the $l$-strong digraph $D^*$ as stated in Proposition \ref{prop1}. Our construction clearly shows that $D = R[S_1, \ldots, S_r]$ is $l$-strong. We now claim that $D$ is not $k$-linked. Since $S_r$ is not 2-linked, there exist two pairs $\{u,x\}$ and $\{v,y\}$ such that $S_r$ contains no disjoint $(u, v)$-path {and} $(x, y)$-path. Now, define $x_i = v_{2i}$ and $y_i = v_{2i-1}$ for each $i \in [k-2]$. Also set $x_{k-1} = u$, $x_k = x$, $y_{k-1} = v$, and $y_k = y$. It is easy to check that $D$ contains no $k$ disjoint paths linking $x_i$ to $y_i$ for $i \in [k]$.
\end{proof}

{In fact, Lemma \ref{anchor} was further refined into the following lemma in \cite{chen}, which in turn  {will lead} to a slight improvement of Theorem \ref{fina}.}
\begin{theorem}
{\cite{chen}  Let $T$ be a tournament on $n$ vertices. If $n\geq 8.5k-6$, then there are two disjoint vertex sets each of size $k$ such that one short anchors another.}
\end{theorem}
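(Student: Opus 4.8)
Since this statement is quoted from \cite{chen}, I only sketch how such a refinement of Lemma \ref{anchor} would be obtained. The plan is to keep the architecture of the proof of Lemma \ref{anchor} from \cite{Meng(2021)} — isolate a large ``transit'' set $U$, pick the anchoring and anchored $k$-sets outside $U$, and route greedily through $U$ — but to feed it the much more structured transit set produced by Lemma \ref{lemma8} (and by its analogue for the converse tournament). Write $n=|V(T)|$ and assume $n\geq 8.5k-6$. First I would apply Lemma \ref{lemma8} iteratively to $T$, repeatedly removing a nearly in-dominating vertex of what remains, to obtain $U=\{u_1,\ldots,u_m\}$ with $m=n-2k$ ($\geq 6.5k-6$) such that each $u_i$ is a nearly in-dominating vertex of $T\setminus\{u_1,\ldots,u_{i-1}\}$. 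By Definition \ref{def2}, for every $c\in\mathbb{N}$ at most $2c$ vertices fail to be $c$-good for $u_i$, so almost every vertex reaches $u_i$ by \emph{many independent} paths of length at most $2$; applying the same construction to the converse tournament gives, among the remaining $2k$ vertices, a plentiful supply of vertices that out-dominate almost all of $U$. Using Lemma \ref{lemma8} rather than a bare in-king (Landau) is essential here: it is precisely the abundance and independence of these length-$2$ connections that lets the $k$ paths be made disjoint for \emph{every} permutation simultaneously.

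Next I would choose $A$ and $B$ inside $W:=V(T)\setminus U$, which has $2k$ vertices: take the anchoring set $A$ to consist of $k$ vertices that are $\gamma$-out-dominators of $U$ (Definition \ref{def3}) for a suitable $\gamma=\Theta(k)$, and the anchored set $B$ to consist of the remaining $k$ vertices, arranging that each $b\in B$ is reached by length-$\leq 2$ paths from $\Theta(k)$ vertices of $U$. Every vertex of $W$ has one of these two useful forms — if it in-dominates few vertices of $U$ it out-dominates many, and conversely — so a valid split exists once the ``good'' class of each form has size at least $k$. Forcing this while keeping $m$ as small as $6.5k-6$ (rather than $7k-6$) is where the constant must be squeezed: one argues by a case analysis on the out-degree sequence of $T\langle W\rangle$, the delicate case being when $W$ has too few out-dominators of $U$, which one resolves either by re-selecting part of $U$ so as to absorb the offending vertices or by exploiting the dual in-dominator structure.

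Finally, fix an arbitrary permutation $\pi$ of $[k]$ and build disjoint $(a_i,b_{\pi(i)})$-paths greedily for $i=1,\ldots,k$: pick an unused $u\in U$ with $a_i\to u$, then either use $u\to b_{\pi(i)}$ (a $2$-path) or, if that arc is absent, an unused $u'\in U$ with $u\to u'\to b_{\pi(i)}$ taken from the length-$\leq 2$ connections to $b_{\pi(i)}$ guaranteed above (a path of length $\leq 3$). At every step at most $3k$ vertices of $U$ have been used, while Definition \ref{def2} with $c=\Theta(k)$ leaves at least $|U|-2c$ admissible choices, and since these counts are uniform over $U$ the greedy process succeeds for \emph{every} $\pi$. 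The main obstacle is exactly that shrinking $U$ down to $\approx 6.5k$ makes the ``no $2$-path to $b_{\pi(i)}$'' case genuinely unavoidable, so one must verify that the length-$3$ detours inside $U$ can always be completed without exhausting the reservoir; this tight counting — leaning on the degree-sequence case analysis and on the \emph{independence} (not just existence) of the short paths supplied by Lemma \ref{lemma8} — is what separates $8.5k-6$ from the more comfortable bound $9k-6$.
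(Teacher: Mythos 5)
This theorem is not proved in the paper at all: it is quoted from \cite{chen}, and the authors explicitly decline even to use it, preferring Lemma \ref{anchor} to keep their calculations simple. So there is no in-paper argument to compare against, and your proposal must stand on its own as a proof of the $8.5k-6$ bound --- which it does not, by its own admission. You set up a plausible architecture (a large transit set $U$ of nearly in-dominating vertices, a split of $W=V(T)\setminus U$ into an anchoring set $A$ and an anchored set $B$, greedy routing through $U$), but you defer exactly the two steps that carry the improvement. First, the split of $W$: in a tournament every $w\in W$ satisfies $d^+_U(w)+d^-_U(w)=|U|$, so each $w$ is a $\gamma$-out-dominator or a $\gamma$-in-dominator of $U$ for $\gamma\approx|U|/2$, but nothing prevents all $2k$ vertices of $W$ from being of the same type; your resolution of this case (``re-selecting part of $U$\ldots or exploiting the dual in-dominator structure'') is a placeholder, not an argument. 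Second, the routing through a reservoir of size only $6.5k-6$: you concede that the length-$3$ detour case is unavoidable and that ``one must verify'' the reservoir survives --- but that verification, with the exceptional sets of size $2c=\Theta(k)$ from Definition \ref{def2} subtracted and the count required to stay positive for every permutation simultaneously, is precisely the content of the theorem. The constant $8.5$ is nowhere derived.

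There is also a directional mismatch worth flagging. Lemma \ref{lemma8} makes almost every vertex reach $u_i$ by many independent short paths, i.e., it controls paths \emph{into} $U$; the last hop of your routes $a_i\to u\to(u')\to b_{\pi(i)}$ needs many vertices of $U$ with short paths \emph{out to} $b_{\pi(i)}$, which is a different (converse) statement, and the middle vertices of the $c$-good $2$-paths may lie anywhere --- inside $W$, on already-built paths --- so the disjointness bookkeeping is not as uniform over $U$ as your final paragraph assumes. None of this says the strategy is wrong (it is essentially the Pokrovskiy--Meng et al.\ scheme that proves the $9k-6$ bound), but as written this is a research plan rather than a proof, and the gap between it and the cited result of \cite{chen} is exactly the part you have not supplied.
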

{However, in  {order to keep our  calculations reasonably simple we have decided  to use } Lemma \ref{anchor} to prove Theorem \ref{fina}.}

The most obvious open problem is to reduce the connectivity bound from $3k$ to $2k+1$ in Theorem \ref{the0}. Indeed, Gir\~{a}o, Snyder, and Popielarz \cite{Girao(2021)} proposed the following conjecture.

\begin{conjecture}
\cite{Girao(2021)} There exists a constant $C > 0$ such that every $(2k+1)$-connected tournament with minimum out-degree at least $Ck$ is $k$-linked.
\end{conjecture}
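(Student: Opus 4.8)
The plan is to retain the nearly in-dominating set strategy behind Theorem~\ref{the0} and to locate precisely where its proof spends $3k$ units of connectivity, then replace that expenditure by the large out-degree hypothesis. First I would argue by induction on $k$. If the prescribed pairs happen to contain an arc $x_iy_i$, then $D\setminus\{x_i,y_i\}$ is $(2k-1)$-strong, i.e.\ $(2(k-1)+1)$-strong, and its minimum out-degree is at least $Ck-2\ge C(k-1)$ once $C\ge 2$; the induction hypothesis then links the remaining $k-1$ pairs inside $D\setminus\{x_i,y_i\}$, and the arc $x_iy_i$, being vertex-disjoint from those paths, completes the $k$-linkage. This is the same opening move as in the proof of Theorem~\ref{the1}, and it lets me assume $x_i\not\to y_i$ for every $i$.

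Next I would build a nearly in-dominating set $U$ of $D\setminus(X\cup Y)$ of order $3k$ by iterating Lemma~\ref{lemma8}, preserving the dichotomy that every vertex outside $U$ is either a $2k$-out-dominator or a $(k+1)$-in-dominator of $U$. The decisive point is the Menger step. In Theorem~\ref{the0} one first reserves a matching $\{x_ix_i^+\}$ for the hard subset $X_1$ of $X$ and then forces the collection $\mathcal{Q}$ of $k$ disjoint $(U,Y)$-paths to avoid the set $X\cup\{x_i^+\}$ of size up to $2k$; surviving this deletion is exactly what forces $\kappa(D)\ge 3k$. I would instead find $\mathcal{Q}$ avoiding only $X$: deleting $X$ from a $(2k+1)$-strong tournament leaves a $(k+1)$-strong digraph, so by Theorem~\ref{menger} a minimal such $\mathcal{Q}$ still exists, and one names $q_i$ to be the initial vertex of the path ending at $y_i$. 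Since Lemma~\ref{lem2.2} needs no global connectivity, only out-degree and the in-dominating structure, the routing of the easy vertices $X_2$ to their targets $q_j\in\mathrm{Ini}(\mathcal{Q})$ and the final linking of $\mathrm{Ter}(\mathcal{P}_1)$ to the remaining initial vertices go through verbatim, once $C$ is large enough to absorb the growing forbidden sets $W$.

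The heart of the matter, and the main obstacle, is to route the hard set $X_1$ \emph{without} pre-reserving its out-dominators. A vertex $x_i\in X_1$ may have all of its out-neighbours be $2k$-out-dominators of $U$, so Lemma~\ref{lem2.2} cannot be applied to it directly; instead one must route $x_i\to x_i^+\to p_i$ with $x_i^+$ a $2k$-out-dominator and $p_i\in U\setminus\mathrm{Ini}(\mathcal{Q})$, and the obstruction is precisely that $x_i^+$ must lie off $V(\mathcal{Q})$. If the candidate out-dominators of $x_i$ all sit on the paths of $\mathcal{Q}$, no post-hoc choice survives, which is exactly why Theorem~\ref{the0} reserves them in advance. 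To break this I would redefine $X_1$ with the larger threshold $|V(\mathcal{Q})|+k$ and show, using $\delta^+\ge Ck$, that one can always pick for each $x_i\in X_1$ a fresh out-dominator off $V(\mathcal{Q})$ while simultaneously leaving every vertex of $X_2$ with at least $\delta^+-|V(\mathcal{Q})|-O(k)$ in-dominating out-neighbours, so that Lemma~\ref{lem2.2} still applies to $X_2$.

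Everything then reduces to one quantitative question: how large is $|V(\mathcal{Q})|$? Bounding it by $O(k)$ would make the whole argument go through and yield the connectivity bound $2k+1$; this amounts to showing that in a $(2k+1)$-strong tournament with $\delta^+\ge Ck$ the minimal family of $k$ disjoint $(U,Y)$-paths is short. I expect this to be the genuinely hard, and currently open, step: large minimum out-degree intuitively forces small distances, but a clean bound on the total length of a \emph{minimal disjoint family} of paths into $Y$, as opposed to a single shortest path, is not available from the present tools and seems to need a new idea, such as choosing the starting vertices of $\mathcal{Q}$ inside $U$ to be out-close to $Y$, or a symmetric nearly out-dominating construction anchored at $Y$. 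That the threshold is sharp, and that the out-degree hypothesis is indispensable, is witnessed by the $2k$-strong tournaments of \cite{Zhou(2024)} and the $2.5k$-strong examples of \cite{Girao(2021)} with small semi-degree; any proof must therefore use $\delta^+\ge Ck$ exactly to destroy the bottlenecks appearing in those constructions, which is what a bound on $|V(\mathcal{Q})|$ would accomplish.
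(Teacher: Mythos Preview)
The statement you are attempting to prove is not a theorem of the paper but an open \emph{conjecture}, cited from \cite{Girao(2021)} and presented in Section~6 precisely as ``the most obvious open problem'' of reducing the connectivity bound in Theorem~\ref{the0} from $3k$ to $2k+1$. The paper offers no proof, so there is nothing to compare your approach against.

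As for the proposal itself, you correctly identify where the $3k$-strong hypothesis is consumed in the proof of Theorem~\ref{the0}: the Menger step that produces $\mathcal{Q}$ while avoiding $X\cup\{x_1^+,\ldots,x_l^+\}$. Your plan to apply Menger after deleting only $X$ (thereby needing only $(2k+1)$-strongness) and to choose the out-dominators $x_i^+$ \emph{after} $\mathcal{Q}$ is fixed is the natural attack. But you also correctly isolate the fatal gap: once $\mathcal{Q}$ is chosen first, there is no control on $|V(\mathcal{Q})|$, and the subsequent greedy choices of $x_i^+$ off $V(\mathcal{Q})$, as well as the applications of Lemma~\ref{lem2.2}, require forbidden sets of size $O(k)$. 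Redefining $X_1$ with threshold $|V(\mathcal{Q})|+k$ does not help, because the lower bound $\delta^+(D)-|V(\mathcal{Q})|-O(k)$ you need for Lemma~\ref{lem2.2} on $X_2$ is only useful if $|V(\mathcal{Q})|\le Ck-O(k)$, which is exactly the unproved assertion. A minimal disjoint $(U,Y)$-family in a $(2k+1)$-strong tournament can, a priori, be long; neither the minimality condition nor the large out-degree hypothesis gives any known bound on its total order. You yourself flag this as ``the genuinely hard, and currently open, step,'' and that assessment is accurate: without a bound of the form $|V(\mathcal{Q})|=O(k)$ (or a genuinely different mechanism for routing $X_1$), the argument does not close, and the conjecture remains open.
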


It is interesting to explore both the relationship between connectivity and out-degree and the classes of digraphs that satisfy Conjecture \ref{conj1}. It would also be interesting to identify a larger class of digraphs, or a class not covered by Theorem \ref{the1}, that are $k$-linked under the same connectivity assumptions.




\end{document}